\newif\ifAMS
\AMStrue\usepackage{amssymb}}{}
\newtheorem{thm}{Theorem}[section]
\newtheorem{cor}[thm]{Corollary}
\newtheorem{prop}{Proposition}[section]
\newtheorem{lem}[thm]{Lemma}
\newtheorem{rem}{Remark}[section]
\theoremstyle{definition}
\newtheorem{defn}{Definition}[section]
\numberwithin{equation}{section}
\begin{document}
\bigskip
\bigskip
\bigskip
\bigskip
\bigskip

\title{Maximal green sequences\\
for preprojective algebras}

\bigskip
\bigskip
\bigskip
\bigskip

\author{Magnus Engenhorst\footnote{engenhor@math.uni-bonn.de}\\
Mathematical Institute, University of Bonn\\
Endenicher Allee 60, 53115 Bonn, Germany\\[5mm]}

\maketitle

\begin{abstract}
Maximal green sequences were introduced as combinatorical counterpart for Donaldson-Thomas invariants for 2-acyclic quivers with potential by B. Keller. We take the categorical notion and introduce maximal green sequences for hearts of bounded t-structures of triangulated categories that can be tilted indefinitely. We study the case where the heart is the category of modules over the preprojective algebra of a quiver without loops. The combinatorical counterpart of maximal green sequences for Dynkin quivers are maximal chains in the Hasse quiver of basic support $\tau$-tilting modules. We show that a quiver has a maximal green sequence if and only if it is of Dynkin type. More generally, we study module categories for finite-dimensional algebras with finitely many bricks.           
\end{abstract}

\section{Introduction}

The motivation for this paper arose from the connection from maximal green sequences of a 2-acylic quiver $Q$ to stable modules over the Jacobi algebra $J(Q,W)$ for some non-degenerate potential $W$.\cite{10} Maximal green sequences were introduced by B. Keller in \cite{70} as certain sequences of mutations of 2-acyclic quivers $Q$ that correspond to sequences of simple tilts in the finite-dimensional derived catgeory of the Ginzburg algebra of $(Q,W)$ (cf. \cite{72}). We are interested in the categorical side of this correspondence and introduce maximal green sequences associated to Abelian subcategories of triangulated catgeories which are 'nice'. More precisely, a maximal green sequence is a certain sequence of simple tilts of a algebraic heart of a bounded t-structure of a triangulated category that we can tilt indefinitely (cf. Definition \ref{mgs}). An example is the category of finite-dimensional nilpotent modules $\mathcal{A}=\mathcal{P}(Q)-nil$ over the preprojective algebra $\mathcal{P}(Q)$ of a quiver $Q$ without loops inside the bounded derived category $\mathcal{D}^{b}(\mathcal{A})$. In the case of a Dynkin quiver we replace the derived category by a 'better behaved' 2-Calabi-Yau category described in \cite{1} but Theorem 1.1 also holds for the bounded derived category of $\mathcal{P}(Q)$ for a Dynkin quiver $Q$. If a quiver with potential has a maximal green sequence then its associated Jacobi algebra is finite-dimensional (Theorem 5.4 in \cite{70} and Prop. 8.1  in \cite{75}). We have the following analogue in our case (Propositions \ref{prop2} and \ref{prop3}):

\begin{thm}
Let $Q$ be a quiver without loops. Then the following is equivalent:
\begin{enumerate}
\item[(i)] There is a maximal green sequence of $\mathcal{P}(Q)$.
\item[(ii)] $Q$ is of Dynkin type.
\item[(iii)] $\mathcal{P}(Q)$ is finite-dimensional.
\end{enumerate}
\end{thm}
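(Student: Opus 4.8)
The plan is to establish the equivalence (ii) $\Leftrightarrow$ (iii) as a classical fact and then prove the two implications (ii) $\Rightarrow$ (i) and (i) $\Rightarrow$ (ii), which are (up to cross-references) Propositions \ref{prop2} and \ref{prop3}. The equivalence (ii) $\Leftrightarrow$ (iii) is the theorem of Gelfand--Ponomarev (see also Dlab--Ringel): for a quiver $Q$ without loops, $\mathcal{P}(Q)$ is finite-dimensional if and only if every connected component of the underlying graph of $Q$ is a Dynkin diagram. So the substance is in relating maximal green sequences to this finiteness.

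For (ii) $\Rightarrow$ (i): assume $Q$ is of Dynkin type, so $\mathcal{P}(Q)$ is a finite-dimensional (self-injective) algebra and we may work in the $2$-Calabi--Yau category of \cite{1} (or directly in $\mathcal{D}^{b}(\mathcal{A})$, $\mathcal{A}=\mathcal{P}(Q)\text{-}\mathrm{nil}$). The algebraic hearts obtained from the standard heart $\mathcal{A}$ by iterated simple tilts correspond bijectively to functorially finite torsion classes of $\mathcal{A}$, i.e.\ to basic support $\tau$-tilting $\mathcal{P}(Q)$-modules (Adachi--Iyama--Reiten), with simple tilts corresponding to arrows of the Hasse quiver of such modules; by Mizuno's classification this poset is isomorphic to the (left) weak order on the Weyl group $W$ of $Q$. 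Since $W$ is finite, the Hasse quiver is a finite connected quiver with a unique source (the torsion class $\mathcal{A}$, i.e.\ the module $\mathcal{P}(Q)$, giving the heart $\mathcal{A}$) and a unique sink (the torsion class $0$, giving the shifted heart $\mathcal{A}[1]$ in the convention of Definition \ref{mgs}); any saturated path from source to sink is finite and, by construction, is a maximal green sequence.

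For (i) $\Rightarrow$ (ii) I would argue by contraposition: assume $Q$ is not of Dynkin type, so by (ii) $\Leftrightarrow$ (iii) the algebra $\mathcal{P}(Q)$ is infinite-dimensional, and show $\mathcal{A}$ has no maximal green sequence. A maximal green sequence $\mathcal{A}=\mathcal{A}_0\to\mathcal{A}_1\to\cdots\to\mathcal{A}_N$ produces a finite saturated chain of torsion classes $\mathcal{A}=\mathcal{T}_0\supsetneq\mathcal{T}_1\supsetneq\cdots\supsetneq\mathcal{T}_N=0$ in $\mathcal{A}$ (the torsion classes of the successive HRS tilts), each covering relation labelled by a brick of $\mathcal{A}$; by the lattice-theoretic description of $\mathrm{tors}(\mathcal{A})$ (Demonet--Iyama--Reiten--Reineke--Thomas, in the form valid for a Hom-finite length category), the existence of one finite maximal chain forces $\mathrm{tors}(\mathcal{A})$ to be a finite lattice, so $\mathcal{A}$ has only finitely many bricks. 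But this fails for non-Dynkin $Q$: the canonical surjection $\mathcal{P}(Q)\to kQ$ onto the path algebra induces a fully faithful exact embedding $kQ\text{-}\mathrm{mod}\hookrightarrow\mathcal{P}(Q)\text{-}\mathrm{nil}$ (a finite-dimensional $kQ$-module is automatically nilpotent over $\mathcal{P}(Q)$ since $Q$ is acyclic and the reverse arrows act as zero), and it carries bricks to bricks; since $Q$ is not Dynkin, $kQ$ is not $\tau$-tilting finite, hence brick-infinite (Demonet--Iyama--Jasso) --- concretely, for Euclidean $Q$ the simple regular modules of the homogeneous tubes already form a $\mathbb{P}^1$-indexed family of pairwise non-isomorphic bricks. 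Thus $\mathcal{A}$ has infinitely many bricks, a contradiction, so (i) fails.

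The main obstacle is the implication (i) $\Rightarrow$ (ii): one must make the torsion-class/brick-labelling machinery work for the heart $\mathcal{A}=\mathcal{P}(Q)\text{-}\mathrm{nil}$, which for non-Dynkin $Q$ is \emph{not} the module category of a finite-dimensional algebra, so the cited lattice-theoretic and brick-finiteness statements are being used in their appropriate generality for Hom-finite length categories rather than as black boxes; dually, on the representation-theoretic side one must verify that the infinitely many bricks genuinely lie in $\mathcal{A}$, i.e.\ are \emph{nilpotent} finite-dimensional modules. By contrast, (ii) $\Rightarrow$ (i) is comparatively soft once Mizuno's theorem and the correspondence between simple tilts and arrows of the support $\tau$-tilting Hasse quiver are in hand.
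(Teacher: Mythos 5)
Your overall architecture (classical (ii)$\Leftrightarrow$(iii), then (ii)$\Rightarrow$(i) and (i)$\Rightarrow$(ii)) matches the paper, and your (ii)$\Rightarrow$(i) via Mizuno's identification of the torsion-class poset with the weak order on the finite Weyl group is a workable, if different, route: the paper instead first shows (using finiteness of bricks and Lemma \ref{prop1}(i)) that any green sequence terminates, and then uses stability conditions to show a maximal green sequence tilts exactly once at each positive root; your soft existence argument buys less (no length formula) but needs less machinery beyond Mizuno and the tilt/Hasse-arrow correspondence of Proposition \ref{prop4}.

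The implication (i)$\Rightarrow$(ii), however, has a genuine gap. The lattice-theoretic principle you invoke --- that the existence of a single finite maximal chain in the poset of torsion classes forces that poset to be finite, hence forces brick-finiteness --- is false, already for finite-dimensional hereditary algebras: the path algebra of the Kronecker quiver admits a maximal green sequence of length $2$, yet has infinitely many bricks and infinitely many torsion classes (a $\mathbb{P}^1$-family coming from the homogeneous tubes). So exhibiting infinitely many bricks in $\mathcal{P}(Q)$-$nil$ (which you do correctly via the embedding of $kQ$-$mod$) does not by itself contradict the existence of a maximal green sequence. The paper's argument is essentially different and specific to preprojective algebras: by Crawley-Boevey--van den Bergh and Ikeda (Propositions \ref{cbb} and \ref{ikeda}), for \emph{every} positive indivisible root $\alpha$ and every heart in a green sequence there is a semistable object of class $\alpha$, and the sweeping argument in the proof of Proposition \ref{prop2} shows that a maximal green sequence must tilt at an object of class $\alpha$ for each such $\alpha$, each indecomposable being tilted at most once by Lemma \ref{prop1}(i). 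Since a non-Dynkin quiver has infinitely many positive indivisible roots, no finite sequence can accomplish this. To repair your proof you must replace the false lattice-theoretic step with this root-counting input (or some other argument genuinely exploiting the $2$-Calabi--Yau symmetry of $\mathcal{P}(Q)$); brick-infiniteness alone is not an obstruction to maximal green sequences.
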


The equivalence of (ii) and (iii) is classical (see for instance \cite{137}). Stable modules over $\mathcal{P}(Q)$ play a key role in the proof of Theorem 1.1. They play a central role in the proof of the Kac conjecture for indivisible dimension vectors by W. Crawley-Boevey and M. van den Bergh in \cite{160}. Their key step in the formulation of Proposition \ref{cbb} is also crucial for this paper. Another motivation comes from work on spaces of stability conditions for preprojective algebras \cite{1,170}. Further, stable modules over preprojective algebras show up in the work of S. Cecotti on BPS states in \cite{3}. In the case of a Dynkin quiver $Q$ Y. Mizuno gave a identification of the Hasse quivers of torsion classes in $\mathcal{P}(Q)-nil$, basic support $\tau$-tilting modules and the Weyl group with the weak (Bruhat) order (Theorem 4.1 in \cite{5}). We show that the fact that we have finitely many bricks in this case underlies this classification. We give an explicit one-to-one correspondence between maximal green sequences for finite-dimensional algebras $A$ with finitely many bricks and torsion classes in $A-mod$ (Proposition \ref{prop4}).


Maximal green sequences are induced by discrete central charges on $\mathcal{P}(Q)-nil$ by Prop. 4.1 in \cite{10}. Enumerative results for reduced decompositions of the longest element of the Weyl group imply upper bounds on the number of (ordered) BPS spectra with respect to a discrete central charge (remark 4.1). This could be of interest in physics.\\ 

\textbf{Notation:} Given a set of objects or full subcategories $E_i$ for $i\in I$ for some index set $I$ $\left\langle E_i:i\in I\right\rangle$ will denote the extension-closed full subcategory generated by $E_i$ with $i\in I$.

\section{Preprojective algebras}

In this section we review results on preprojective algebras that will be used in the next sections. For more details see e.g. \cite{140} or \cite{155}.\\ 

Let $Q=(Q_0,Q_1)$ be a quiver without loops $\circlearrowleft$ and with $n$ vertices and let $h,t:Q_1\rightarrow Q_0$ be the head and tail maps. We obtain the double quiver $\overline{Q}$ from $Q$ by adding for every arrow $a:i\rightarrow j$ in $Q_1$ an arrow $a^*:j\rightarrow i$ in the opposite direction. Let $\mathbb{C}\overline{Q}$ be the patgh algebra of $\overline{Q}$.

\begin{defn}\cite{80}
The preprojective algebra $\mathcal{P}(Q)$ of $Q$ is defined by $$\mathcal{P}(Q):=\mathbb{C}\overline{Q}/(c)$$ where c is the ideal generated by $\sum_{a\in Q_{1}}(aa^*-a^*a)$.
\end{defn}

The preprojective algebra does not depend on the orientation of $Q$. Let $\mathcal{P}(Q)-mod$ be the category of finite-dimensional left $\mathcal{P}(Q)$-modules and $\mathcal{P}(Q)-nil$ the category of nilpotent finite-dimensional left $\mathcal{P}(Q)$-modules. A $\mathcal{P}(Q)$-module $M$ is nilpotent if a composition series of $M$ contains only the simple modules $S_1,\ldots, S_n$ associated to the $n$ vertices of $Q$. The category $\mathcal{P}(Q)-nil$ is of finite length with $n$ simple modules $S_1,\ldots, S_n$. If $Q$ is a Dynkin quiver the algebra $\mathcal{P}(Q)$ is finite-dimensional and all finite-dimensional $\mathcal{P}(Q)$-modules are nilpotent.\\


The modules in $\mathcal{P}(Q)-mod$ can be identified with the finite-dimensional representations $V=(V_i,\phi_a)$ of the quiver $\overline{Q}=(\overline{Q}_0,\overline{Q}_1)$ in which the linear maps $\phi_a, a\in\overline{Q}_1$ fulfill the relations $$\sum_{a\in Q_1:h(a)=i}\phi_{a}\phi_{a^*}-\sum_{a\in Q_1: t(a)=i}\phi_{a^*}\phi_{a}=0$$ for all $i\in Q_0$.\\

Let $(\ ,\ )$ be the symmetric bilinear form defined on the root lattice $$\mathbb{Z}Q_0=\mathbb{Z}[S_1]\oplus\cdots\oplus\mathbb{Z}[S_n]$$ by
\begin{align}
\label{cartan}
(x,y):=2\sum_{i\in Q_{0}}x_i y_i-\sum_{\substack{a:i\rightarrow j\\ a\in \overline{Q}_{1}}}x_i y_j.
\end{align}


\begin{prop}\cite{100}
\label{cb}
Let $Q$ be a quiver without loops and let $M,N$ be two finite-dimensional $\mathcal{P}(Q)$-modules. Then we have $$(\underline{dim}\ M, \underline{dim}\ N)=\mathrm{Hom}(M,N)+\mathrm{Hom}(N,M)-\dim\ Ext^1(M,N).$$ In particular, $Ext^1(M,N)=Ext^1(N,M)$. 
\end{prop}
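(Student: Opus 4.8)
The plan is to recover the standard proof via the \emph{standard resolution} of a module over $\Pi:=\mathcal{P}(Q)$. Let $S=\mathbb{C}\,e_1\oplus\cdots\oplus\mathbb{C}\,e_n\subseteq\Pi$ be the span of the vertex idempotents, and write $D(-)=\mathrm{Hom}_{\mathbb{C}}(-,\mathbb{C})$ for the vector space dual. For a finite-dimensional $\Pi$-module $M=(M_i,\phi_a)$ the first step is to record the three-term complex of projective left $\Pi$-modules
$$\Pi\otimes_{S}M\ \longrightarrow\ \bigoplus_{a\in\overline{Q}_1}\Pi e_{h(a)}\otimes_{\mathbb{C}}M_{t(a)}\ \longrightarrow\ \Pi\otimes_{S}M\ \longrightarrow\ M\ \longrightarrow\ 0,$$
in which the last map is multiplication, the previous one is built from the matrices $\phi_a$, and the third is built by differentiating the defining relation $\sum_{a\in Q_1}(aa^*-a^*a)$. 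I will use the standard fact that this sequence is exact at $M$, at $\Pi\otimes_S M$ and at $\bigoplus_a(\cdots)$ for every quiver $Q$ without loops, regardless of whether $\Pi$ is finite-dimensional (for non-Dynkin $Q$ it is even a genuine projective resolution, but we will not need this).

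Applying $\mathrm{Hom}_{\Pi}(-,N)$ and using $\mathrm{Hom}_{\Pi}(\Pi e_i\otimes_{\mathbb{C}}V,N)\cong\mathrm{Hom}_{\mathbb{C}}(V,N_i)$, the second step is to identify the resulting complex with the explicit three-term complex of finite-dimensional vector spaces
$$C^\bullet(M,N):\qquad 0\to\bigoplus_{i\in Q_0}\mathrm{Hom}(M_i,N_i)\xrightarrow{\ d^0\ }\bigoplus_{a\in\overline{Q}_1}\mathrm{Hom}(M_{t(a)},N_{h(a)})\xrightarrow{\ d^1\ }\bigoplus_{i\in Q_0}\mathrm{Hom}(M_i,N_i)\to 0,$$
where $d^0$ sends $(h_i)_i$ to $(\psi_a h_{t(a)}-h_{h(a)}\phi_a)_a$ and $d^1$ is assembled from the $\psi_a$ with the signs $\varepsilon(a)=\pm1$ distinguishing the arrows of $Q$ from the added opposite arrows. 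From the exactness recorded above one reads off $H^0(C^\bullet(M,N))\cong\mathrm{Hom}_{\Pi}(M,N)$ and $H^1(C^\bullet(M,N))\cong\mathrm{Ext}^1_{\Pi}(M,N)$ (only exactness at the first two projectives is used, so Dynkin type causes no trouble here).

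The crucial step is to identify the cokernel $H^2(C^\bullet(M,N))$ with $D\,\mathrm{Hom}_{\Pi}(N,M)$. For this I would construct a perfect pairing between $C^\bullet(M,N)$ and $C^\bullet(N,M)$ which, after reversing the degree, is a duality of complexes: in degrees $0$ and $2$ one uses the trace pairings $\bigoplus_i\mathrm{Hom}(M_i,N_i)\times\bigoplus_i\mathrm{Hom}(N_i,M_i)\to\mathbb{C}$, $\big((h_i),(g_i)\big)\mapsto\sum_i\mathrm{tr}(g_ih_i)$, while in degree $1$ one pairs the $a$-component of $C^1(M,N)$ with the $a^*$-component of $C^1(N,M)$ by the trace, weighted by $\varepsilon(a)$. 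The real work is verifying the adjunction $\langle d^kx,\,y\rangle=\pm\langle x,\,\partial^{1-k}y\rangle$ between the two differentials; this is a sign-bookkeeping computation that comes down precisely to the preprojective relation $\sum_{a\in Q_1}(\phi_a\phi_{a^*}-\phi_{a^*}\phi_a)=0$ holding for both $M$ and $N$. Granting it, $C^\bullet(M,N)$ is isomorphic to the degree-reversed $\mathbb{C}$-dual of $C^\bullet(N,M)$, whence $H^2(C^\bullet(M,N))\cong D\,H^0(C^\bullet(N,M))\cong D\,\mathrm{Hom}_{\Pi}(N,M)$ and, as a byproduct, $H^1(C^\bullet(M,N))\cong D\,H^1(C^\bullet(N,M))$, which already gives $\dim\mathrm{Ext}^1(M,N)=\dim\mathrm{Ext}^1(N,M)$.

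It remains to compare two computations of the Euler characteristic of $C^\bullet(M,N)$. On the one hand it equals
$$\dim C^0-\dim C^1+\dim C^2=2\sum_{i\in Q_0}\dim M_i\,\dim N_i-\sum_{\substack{a:i\to j\\ a\in\overline{Q}_1}}\dim M_i\,\dim N_j=(\underline{dim}\,M,\underline{dim}\,N)$$
by the definition \eqref{cartan} of the form; on the other hand, by the three cohomology identifications it equals $\dim\mathrm{Hom}_{\Pi}(M,N)-\dim\mathrm{Ext}^1_{\Pi}(M,N)+\dim\mathrm{Hom}_{\Pi}(N,M)$. Equating the two yields the asserted identity, and symmetry of the left-hand side together with the symmetry of the two $\mathrm{Hom}$-terms gives $\mathrm{Ext}^1(M,N)=\mathrm{Ext}^1(N,M)$. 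I expect the duality in the third paragraph — pinning down the pairing with the correct signs so that it intertwines the differentials — to be the only genuinely delicate point; everything else is adjunction bookkeeping and a dimension count.
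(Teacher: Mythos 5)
The paper does not actually prove this proposition --- it is quoted directly from Crawley-Boevey \cite{100} --- and your argument is precisely the proof given there: apply $\mathrm{Hom}_{\Pi}(-,N)$ to the standard three-term complex coming from the bimodule presentation of $\Pi$, identify the outer cohomologies with $\mathrm{Hom}_{\Pi}(M,N)$ and $D\,\mathrm{Hom}_{\Pi}(N,M)$ via the trace-pairing duality with the degree-reversed complex $C^{\bullet}(N,M)$, and compare the two computations of the Euler characteristic against the form (\ref{cartan}). The write-up is correct, including the one point you rightly flag as delicate (the sign bookkeeping in the adjunction between the differentials, which is exactly where the preprojective relation is used); there is no gap.
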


A \textit{brick} is a module $M$ with $\mathrm{Hom}(M,M)=\mathbb{C}$. The following is well-known: 

\begin{lem}
\label{cecotti}
Let $Q$ be a Dynkin quiver and $M$ be a brick in $\mathcal{P}(Q)-mod$. Then $Ext^1(M,M)=0$ and $\underline{dim}\ M$ is a root, i.e. $(\underline{dim}\ M,\underline{dim}\ M)=2$.  
\end{lem}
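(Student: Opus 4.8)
The plan is to read everything off Proposition~\ref{cb} together with the positive definiteness of the form \eqref{cartan} in the Dynkin case. First I would apply Proposition~\ref{cb} with $N=M$. Since $M$ is a brick we have $\dim\mathrm{Hom}(M,M)=1$, so the formula collapses to
\[
(\underline{dim}\ M,\underline{dim}\ M)=2-\dim Ext^1(M,M).
\]
Hence the whole lemma reduces to the single inequality $(\underline{dim}\ M,\underline{dim}\ M)\geq 2$: it forces $\dim Ext^1(M,M)\leq 0$, so $Ext^1(M,M)=0$, and then equality $(\underline{dim}\ M,\underline{dim}\ M)=2$ holds automatically, which is exactly the statement that $\underline{dim}\ M$ is a root (for an ADE root lattice the square-length-$2$ vectors are precisely the roots).

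It remains to establish that inequality. The form \eqref{cartan} is the symmetrised Euler form of the underlying graph of $Q$; when $Q$ is Dynkin this graph is a simply-laced Dynkin diagram, so by the classical characterisation of such diagrams $(\ ,\ )$ is positive definite. It is moreover even: expanding \eqref{cartan} and using that $Q$ has no loops gives $(x,x)=2\sum_{i}x_i^2-2\sum_{\{i,j\}}x_ix_j\in 2\mathbb{Z}$ for every $x\in\mathbb{Z}Q_0$. Since $M\neq 0$, the vector $\underline{dim}\ M$ is a nonzero element of a positive definite even lattice, and any such element has square length at least $2$. Feeding this back into the displayed identity completes the argument.

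I do not expect a serious obstacle here; the only two points deserving a word of care are (a) identifying \eqref{cartan} with the positive definite Cartan form in the Dynkin case, and (b) the elementary remark that evenness together with positive definiteness excludes square length $1$ — without evenness, positivity alone would still yield $Ext^1(M,M)=0$ but would not pin the value down to exactly $2$. Everything else is a direct substitution into Proposition~\ref{cb}.
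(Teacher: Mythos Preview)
Your argument is correct and is essentially the paper's own proof: apply Proposition~\ref{cb} with $N=M$, then use that in the Dynkin case the form \eqref{cartan} is positive definite and even to force $(\underline{\dim}\,M,\underline{\dim}\,M)=2$ and hence $Ext^1(M,M)=0$. The paper states this in two sentences, while you have spelled out the intermediate inequality $(\underline{\dim}\,M,\underline{\dim}\,M)\geq 2$ and the role of evenness explicitly, but the content is the same.
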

\begin{proof}
Since $Q$ is Dynkin the symmetric bilinear form (\ref{cartan}) is positive definite. $(M,M)$ is even and therefore $Ext^1(M,M)$ vanishes by Proposition \ref{cb}. 
\end{proof}

Note that there are only finitely many bricks in $\mathcal{P}(Q)-mod$ for a Dynkin quiver $Q$ since all bricks $M$ are \textit{rigid}, i.e. $Ext^1(M,M)=0$ (cf. \cite{150}).

\section{Maximal green sequences}

We want to consider $\mathcal{P}(Q)-nil$ as an algebraic heart of a t-structure of a triangulated category $\mathcal{D}$ such that we can tilt indefinitely (cf. Definition \ref{indefinite}). For $Q$ not of Dynkin type we take $\mathcal{D}^b(\mathcal{P}(Q)-nil)$. B. Keller proved that $\mathcal{D}^b(\mathcal{P}(Q)-nil)$ has a Serre functor $[2]$, i.e. is a 2-Calabi-Yau category in this case \cite{110}. In the case of a Dynkin quiver $Q$ we replace the derived category by a better-behaved category $\mathcal{\hat{D}}$ described in \cite{1}: Let $G\subset SL_2(\mathbb{C})$ be a finite subgroup and let $\mathrm{Coh}_{G}(\mathbb{C}^2)$ denote the category of $G$-equivariant coherent sheaves on $\mathbb{C}^2$. Consider the full subcategory $\mathcal{A}\subset \mathrm{Coh}_{G}(\mathbb{C}^2)$ consisting of equivariant sheaves with no non-trivial $G$-equivariant sections. Then $\mathcal{\hat{D}}$ is the full subcategory of $\mathcal{D}^b(\mathrm{Coh}_{G}(\mathbb{C}^2))$ consisting of complexes whose cohomology sheaves lie in $\mathcal{A}$. The important fact for this paper is that $\mathcal{A}$ is equivalent to $\mathcal{P}(Q)-mod$ where $Q$ is a Dynkin quiver and $\mathcal{\hat{D}}$ is 2-Calabi-Yau.\\

Let $\mathcal{D}$ be the triangulated category $\mathcal{\hat{D}}$ described above in the case of a Dynkin quiver $Q$ and the bounded derived category $\mathcal{D}^b(\mathcal{P}(Q)-nil)$ else. Every simple module $S$ of $\mathcal{P}(Q)-nil$ is a 2-spherical object in $\mathcal{D}$, i.e. 
\begin{align*}
\mathrm{Hom}_{\mathcal{D}}^{i}(S,S)=\begin{cases}
\mathbb{C} & \text{if\ } i=0,2\\
0 & \text{else}
\end{cases}.
\end{align*} 

By \cite {120} every spherical object defines an auto-equivalence $\Phi_{S}$ of $\mathcal{D}$, the Seidel-Thomas twist, such that for every $E\in \mathcal{D}$ there is an exact triangle: 
\begin{align}
\label{seidelthomas}
\mathrm{Hom}^{\bullet}_{\mathcal{D}}(S,E)\otimes S\longrightarrow E\longrightarrow \Phi_{S}(E)\longrightarrow.
\end{align}
We identify throughout the Grothendieck groups $K(\mathcal{P}(Q)-nil)$ and $K(\mathcal{D})$ with the root lattice $\mathbb{Z}Q_0$. The induced linear map on the Grothendieck group $K(\mathcal{D})$ gives $$[\Phi_{S}(E)]=[E]-\chi(S,E)[S]$$ where $\chi:K(\mathcal{D})\times K(\mathcal{D})\rightarrow \mathbb{Z}$ is the Euler form

$$\chi(E,F)=\sum_{i\in\mathbb{Z}}(-1)^{i}\dim_{\mathbb{C}}\ \textrm{Hom}^{i}_{\mathcal{D}}(E,F).$$

We have
\begin{align*}
\chi(S_i,S_j)=\begin{cases}
2 & \text{if } i=j\\
\#(\text{arrows } i\rightarrow j\text{ in }Q)-\#(\text{arrows } j\rightarrow i\text{ in }Q) & \text{if } i\neq j 
\end{cases}
\end{align*}

and thus the lattice $(K(\mathcal{P}(Q)-nil),\chi(\ ,\ ))$ can be identified with the root lattice $(\mathbb{Z}Q_0, (\ ,\ ))$ associated to the quiver $Q$ \cite{158}.\\

Recall the notion of a bounded t-structure $\mathcal{C}\subset\mathcal{D}$ of a triangulated category $\mathcal{D}$ \cite{30}.

\begin{defn}
We call the heart $\mathcal{A}$ of a bounded t-structure of a triangulated category $\mathcal{D}$ \textit{algebraic} if 1. it has finite length, i.e. there are no infinite chains of inclusions or quotients for all objects and 2. it has finitely many simple objects. We call a heart $\mathcal{A}$ \textit{rigid} if all its simple objects $S$ are rigid, i.e. $Ext^1_{\mathcal{A}}(S,S)=0$.
\end{defn}

Note that an algebraic heart is a Krull-Schmidt category. Given a simple object $S$ in an algebraic heart $\mathcal{A}$ there is a well-known construction to define a new heart $\mathcal{A}_S$ of a bounded t-structure of $\mathcal{D}$, see \cite{40,50} We review it in the following.

\begin{defn}
Given two full subcategories $\mathcal{C}_{1}$ and $\mathcal{C}_{2}$ of an Abelian category $\mathcal{A}$ the \textit{Gabriel product} $\mathcal{C}_{1}\star\mathcal{C}_{2}$ is the full subcategory of objects $E$ that fit into a short exact sequence $$0\longrightarrow C_1\longrightarrow E\longrightarrow C_2 \longrightarrow 0$$ with $C_1\in\mathcal{C}_{1}$ and $C_2\in\mathcal{C}_{2}$.
\end{defn}

Then we have the following important definition:

\begin{defn}
A \textit{torsion pair} in an Abelian category $\mathcal{A}$ is a pair of full subcategories $(\mathcal{T}, \mathcal{F})$ satisfying
\begin{enumerate}
\item $\mathrm{Hom}_{\mathcal{A}}(T,F)=0$ for all $T\in\mathcal{T}$ and $F\in\mathcal{F}$;
\item every object $E\in\mathcal{A}$ is an element of the Gabriel product $\mathcal{T}\star\mathcal{F}$.
\end{enumerate} 
\end{defn}  

The objects of $\mathcal{T}$ are called \textit{torsion} and the objects of $\mathcal{F}$ are called \textit{torsion-free}, $\mathcal{T}$ is called \textit{torsion class} and $\mathcal{F}$ \textit{torsion-free class}.

\begin{prop} ~\cite{40}
\label{hrs}
Let $\mathcal{A}$ be the heart of a bounded t-structure on a triangulated category $\mathcal{D}$. Denote by $H^{i}(E)\in\mathcal{A}$ the i-th cohomology object of E with respect to this t-structure. Let $(\mathcal{T}, \mathcal{F})$ be a torsion pair in $\mathcal{A}$. Then the full subcategory
\begin{align}
\mathcal{A}^{*}=\left\{E\in \mathcal{D}| H^{i}(E)=0 \text{ for } i\notin \lbrace 0,1\rbrace, H^{0}(E)\in\mathcal{F}, H^{1}(E)\in \mathcal{T}\right\} \nonumber
\end{align}
is the heart of a bounded t-structure on $\mathcal{D}$. 
\end{prop}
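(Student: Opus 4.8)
This is the tilting construction of Happel--Reiten--Smal\o{} (see \cite{40}), so the plan is to organize the verification of the t-structure axioms rather than to do anything essentially new. First I would write down the candidate t-structure explicitly in terms of the cohomology functors $H^i$ of the given t-structure on $\mathcal{D}$:
\[
\mathcal{D}^{*\leq 0}:=\{E\in\mathcal{D}:H^i(E)=0\text{ for }i>1,\ H^1(E)\in\mathcal{T}\},\qquad \mathcal{D}^{*\geq 1}:=\{E\in\mathcal{D}:H^i(E)=0\text{ for }i<1,\ H^1(E)\in\mathcal{F}\}.
\]
Using the truncation functors $\tau^{\leq n},\tau^{\geq n}$ of the given t-structure one rewrites these as Gabriel products $\mathcal{D}^{*\leq 0}=\mathcal{D}^{\leq 0}\star\mathcal{T}[-1]$ and $\mathcal{D}^{*\geq 1}=\mathcal{F}[-1]\star\mathcal{D}^{\geq 2}$, and a direct inspection shows that $\mathcal{D}^{*\leq 0}\cap\mathcal{D}^{*\geq 1}[1]$ is exactly the subcategory $\mathcal{A}^*$ of the statement. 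Hence it suffices to prove that $(\mathcal{D}^{*\leq 0},\mathcal{D}^{*\geq 1})$ is a bounded t-structure on $\mathcal{D}$; then $\mathcal{A}^*$ is its heart.

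The two formal axioms come next. The inclusions $\mathcal{D}^{*\leq 0}[1]\subseteq\mathcal{D}^{*\leq 0}$ and $\mathcal{D}^{*\geq 1}[-1]\subseteq\mathcal{D}^{*\geq 1}$ are immediate from the displayed description, using only that $0\in\mathcal{T}$ and $0\in\mathcal{F}$. For the orthogonality $\mathrm{Hom}_{\mathcal{D}}(\mathcal{D}^{*\leq 0},\mathcal{D}^{*\geq 1})=0$, take $X\in\mathcal{D}^{*\leq 0}$ and $Y\in\mathcal{D}^{*\geq 1}$. Applying $\mathrm{Hom}_{\mathcal{D}}(-,Y)$ to the truncation triangle $\tau^{\leq 0}X\to X\to H^1(X)[-1]\to$ and using $\mathrm{Hom}_{\mathcal{D}}(\mathcal{D}^{\leq 0},\mathcal{D}^{\geq 1})=0$ shows that $\mathrm{Hom}_{\mathcal{D}}(X,Y)$ is a quotient of $\mathrm{Hom}_{\mathcal{D}}(H^1(X)[-1],Y)$; applying $\mathrm{Hom}_{\mathcal{D}}(H^1(X)[-1],-)$ to the truncation triangle $H^1(Y)[-1]\to Y\to\tau^{\geq 2}Y\to$ identifies the latter group with $\mathrm{Hom}_{\mathcal{A}}(H^1(X),H^1(Y))$, which vanishes because $H^1(X)\in\mathcal{T}$, $H^1(Y)\in\mathcal{F}$ and $\mathrm{Hom}_{\mathcal{A}}(\mathcal{T},\mathcal{F})=0$. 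This is the only step that uses the torsion-pair hypothesis.

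The main work is the decomposition axiom: for each $E\in\mathcal{D}$ I need a triangle $A\to E\to B\to A[1]$ with $A\in\mathcal{D}^{*\leq 0}$ and $B\in\mathcal{D}^{*\geq 1}$, which I would build in two steps with the octahedral axiom. Set $E':=\tau^{\leq 1}E$, so the triangle $E'\to E\to\tau^{\geq 2}E\to$ has third term in $\mathcal{D}^{\geq 2}\subseteq\mathcal{D}^{*\geq 1}$, and $E'$ has nonzero cohomology only in degrees $\leq 1$, with $H^1(E')=H^1(E)$. Decompose $H^1(E)$ with the torsion pair, $0\to t\to H^1(E)\to f\to 0$, $t\in\mathcal{T}$, $f\in\mathcal{F}$, and let $A$ be the fibre of the composite $E'\to H^1(E)[-1]\to f[-1]$. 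The octahedron over $E'\to H^1(E)[-1]\to f[-1]$ places $A$ into a triangle $\tau^{\leq 0}E'\to A\to t[-1]\to$, so $A\in\mathcal{D}^{\leq 0}\star\mathcal{T}[-1]=\mathcal{D}^{*\leq 0}$, while $\mathrm{cone}(A\to E')=f[-1]\in\mathcal{F}[-1]$. Finally the octahedron over $A\to E'\to E$ produces a triangle $A\to E\to B\to$ in which $B$ fits into $f[-1]\to B\to\tau^{\geq 2}E\to$; the long exact cohomology sequence gives $H^i(B)=0$ for $i<1$ and $H^1(B)\cong f\in\mathcal{F}$, i.e.\ $B\in\mathcal{D}^{*\geq 1}$. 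I expect the bookkeeping in these two octahedra — tracking which cohomology object lands in $\mathcal{T}$ and which in $\mathcal{F}$ — to be the only delicate point. Boundedness is then automatic: the definitions give $\mathcal{D}^{\geq a}\subseteq\mathcal{D}^{*\geq a-1}$ and $\mathcal{D}^{\leq b}\subseteq\mathcal{D}^{*\leq b+1}$, so any $E$ with $H^i(E)=0$ for $i\notin[a,b]$ lies in $\mathcal{D}^{*\geq a-1}\cap\mathcal{D}^{*\leq b+1}$, and boundedness of the original t-structure carries over.
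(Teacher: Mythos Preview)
The paper does not prove this proposition: it is stated with a citation to \cite{40} (Happel--Reiten--Smal\o{}) and used as a black box, so there is no in-paper argument to compare against. Your proposal is the standard verification of the HRS tilting construction and is correct as written; the two octahedra and the cohomology bookkeeping are set up properly, and the boundedness inclusions, while not the sharpest possible, suffice.
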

We say $\mathcal{A}^{*}$ is obtained from $\mathcal{A}$ by \textit{(left) tilting} with respect to the torsion pair $(\mathcal{T}, \mathcal{F})$. The pair $(\mathcal{F}, \mathcal{T}[-1])$ is a torsion pair in $\mathcal{A}^{*}$.\\

Suppose $\mathcal{A}\subset\mathcal{D}$ is an algebraic heart of a bounded t-structure on $\mathcal{D}$. Given a simple object $S\in\mathcal{A}$ we can view $\left\langle S\right\rangle$ as the torsion class of a torsion pair on $\mathcal{A}$ with torsion-free class 
\begin{align}
\label{simpletilt}
\mathcal{F}=\left\{E\in\mathcal{A}|\mathrm{Hom}_{\mathcal{A}}(S,E)=0\right\}.
\end{align}
 This gives the \textit{simple (left) tilt of $\mathcal{A}$ at $S$}. If the heart $\mathcal{A}_S$ is again algebraic we can repeat this construction. The composition of left tilts is described by

\begin{lem}\cite{130}
\label{composition}
Let $\mathcal{A}$ be the heart of a bounded t-structure of a triangulated category. Let $(\mathcal{T}, \mathcal{F})$ be a torsion pair in $\mathcal{A}$ and $(\mathcal{T}', \mathcal{F}')$ a torsion pair in $\mathcal{A}^{*}=\left\langle\mathcal{F},\mathcal{T}[-1] \right\rangle$ If $\mathcal{T}'\subset \mathcal{F}$, then the left-tilt $\mathcal{A}^{**}=\left\langle\mathcal{F}',\mathcal{T}'[-1] \right\rangle$ of $\mathcal{A}^{*}$ equals the left-tilt of $\mathcal{A}$ with respect to the torsion pair $(\mathcal{T}\star\mathcal{T}', \mathcal{F}\cap\mathcal{F}')$.  
\end{lem}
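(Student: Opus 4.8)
The plan is to unwind all the subcategories involved in terms of cohomology objects with respect to the original t-structure on $\mathcal{A}$, and then check the torsion-pair axioms for $(\mathcal{T}\star\mathcal{T}', \mathcal{F}\cap\mathcal{F}')$ directly, finally comparing the resulting tilted heart with $\mathcal{A}^{**}$ object by object. First I would record, using Proposition \ref{hrs}, the explicit description of $\mathcal{A}^{*}=\langle\mathcal{F},\mathcal{T}[-1]\rangle$: an object $E\in\mathcal{D}$ lies in $\mathcal{A}^{*}$ iff $H^{i}(E)=0$ for $i\notin\{0,1\}$, $H^{0}(E)\in\mathcal{F}$, $H^{1}(E)\in\mathcal{T}$, where cohomology is taken with respect to $\mathcal{A}$. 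Applying Proposition \ref{hrs} a second time to the torsion pair $(\mathcal{T}',\mathcal{F}')$ in $\mathcal{A}^{*}$, an object lies in $\mathcal{A}^{**}$ iff its $\mathcal{A}^{*}$-cohomology is concentrated in degrees $0,1$ with the degree-$0$ part in $\mathcal{F}'$ and the degree-$1$ part in $\mathcal{T}'$. The first real task is therefore to translate $\mathcal{A}^{*}$-cohomology into $\mathcal{A}$-cohomology; here the hypothesis $\mathcal{T}'\subset\mathcal{F}$ enters, because it forces the objects of $\mathcal{T}'$ (sitting in $\mathcal{A}^{*}$) to be concentrated in $\mathcal{A}$-degree $0$, so that the shift $\mathcal{T}'[-1]$ used to build $\mathcal{A}^{**}$ lands in $\mathcal{A}$-degree $1$, i.e. $\mathcal{A}^{**}$ is still obtained from $\mathcal{A}$ by a single ``width-two'' tilt rather than a genuinely wider one.

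Next I would verify that $(\mathcal{T}\star\mathcal{T}',\mathcal{F}\cap\mathcal{F}')$ is a torsion pair in $\mathcal{A}$. The orthogonality axiom is a diagram chase: given $X\in\mathcal{T}\star\mathcal{T}'$, present it as an extension $0\to T\to X\to T'\to 0$ with $T\in\mathcal{T}$, $T'\in\mathcal{T}'\subset\mathcal{F}$, and given $Y\in\mathcal{F}\cap\mathcal{F}'$; then $\mathrm{Hom}_{\mathcal{A}}(T,Y)=0$ since $T\in\mathcal{T}$ and $Y\in\mathcal{F}$, and $\mathrm{Hom}_{\mathcal{A}}(T',Y)=0$ because, viewing both $T'$ and $Y$ inside $\mathcal{A}^{*}$ (this uses $\mathcal{F}\subset\mathcal{A}^{*}$ as the degree-$0$ part and $\mathcal{T}'\subset\mathcal{F}$), $\mathrm{Hom}$ in $\mathcal{A}$ between objects of $\mathcal{F}$ agrees with $\mathrm{Hom}$ in $\mathcal{A}^{*}$, which vanishes since $T'\in\mathcal{T}'$ and $Y\in\mathcal{F}'$. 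The factorization axiom is built in two stages: first use the torsion pair $(\mathcal{T},\mathcal{F})$ to split an arbitrary $E\in\mathcal{A}$ as $0\to tE\to E\to E/tE\to 0$ with $tE\in\mathcal{T}$, $E/tE\in\mathcal{F}\subset\mathcal{A}^{*}$; then apply the torsion pair $(\mathcal{T}',\mathcal{F}')$ inside $\mathcal{A}^{*}$ to $E/tE$, and splice the two filtration steps together, using $\mathcal{T}'\subset\mathcal{F}$ to check that the pieces really lie in $\mathcal{T}\star\mathcal{T}'$ and $\mathcal{F}\cap\mathcal{F}'$ respectively and that all the short exact sequences are honest sequences in $\mathcal{A}$.

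Finally I would identify $\mathcal{A}^{**}$ with the tilt of $\mathcal{A}$ at $(\mathcal{T}\star\mathcal{T}',\mathcal{F}\cap\mathcal{F}')$. By the cohomological descriptions above, $E\in\mathcal{A}^{**}$ iff $E$ sits in a triangle $F'\to E\to T'[-1]\to$ with $F'\in\mathcal{F}'$, $T'\in\mathcal{T}'$, both regarded in $\mathcal{A}^{*}$; unwinding $\mathcal{F}'\subset\mathcal{A}^{*}$ and $\mathcal{T}'\subset\mathcal{F}\subset\mathcal{A}^{*}$ back to $\mathcal{A}$-cohomology and using the long exact cohomology sequence of this triangle, one sees $H^{i}_{\mathcal{A}}(E)=0$ for $i\notin\{0,1\}$, that $H^{0}_{\mathcal{A}}(E)\in\mathcal{F}\cap\mathcal{F}'$, and that $H^{1}_{\mathcal{A}}(E)$ is an extension of an object of $\mathcal{T}'$ by an object of $\mathcal{T}$, hence lies in $\mathcal{T}\star\mathcal{T}'$; the converse inclusion is obtained by running the same argument backwards. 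I expect the main obstacle to be the bookkeeping in this last identification — keeping straight which $\mathrm{Hom}$-groups and which cohomology functors ($\mathcal{A}$ versus $\mathcal{A}^{*}$) one is working with, and making sure the hypothesis $\mathcal{T}'\subset\mathcal{F}$ is invoked exactly where it is needed (namely to guarantee that $\mathcal{T}'$ consists of $\mathcal{A}$-degree-$0$ objects, so that the composite tilt does not spill into a third cohomological degree).
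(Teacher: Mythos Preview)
The paper does not give its own proof of this lemma: it is simply quoted from Nagao \cite{130} and used as a black box throughout. So there is no ``paper's proof'' to compare against.

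That said, your proposal is correct and is essentially the standard argument. The three steps --- verifying that $(\mathcal{T}\star\mathcal{T}',\mathcal{F}\cap\mathcal{F}')$ is a torsion pair in $\mathcal{A}$, and then matching the two HRS descriptions via $\mathcal{A}$- and $\mathcal{A}^{*}$-cohomology --- go through as you outline. Two small points worth making explicit when you write it out: (1) in the factorisation step, the short exact sequence $0\to T''\to E/tE\to F''\to 0$ produced in $\mathcal{A}^{*}$ is genuinely a short exact sequence in $\mathcal{A}$ because $\mathcal{F}$ is closed under $\mathcal{A}$-subobjects and $\mathcal{A}\cap\mathcal{A}^{*}=\mathcal{F}$, so $F''\in\mathcal{F}$ and the map $T''\to E/tE$ is already monic in $\mathcal{A}$; (2) in the converse inclusion at the end you implicitly use $\mathcal{T}[-1]\subset\mathcal{F}'$, which holds because $\mathrm{Hom}_{\mathcal{D}}(\mathcal{T}',\mathcal{T}[-1])\subset\mathrm{Hom}_{\mathcal{D}}(\mathcal{A},\mathcal{A}[-1])=0$. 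With these two remarks the bookkeeping you flag is routine.
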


\begin{defn}
\label{indefinite}
Let $\mathcal{A}$ be an algebraic heart of a bounded t-structure of a triangulated category $\mathcal{D}$ with $n$ simple objects. We say we can \textit{tilt $\mathcal{A}$ indefinitely} if any heart obtained from $\mathcal{A}$ by a finite sequence of simple tilts is again algebraic with $n$ simple objects. 
\end{defn}

Let $\mathcal{D}$ be the triangulated category $\mathcal{\hat{D}}$ described above in the case of a Dynkin quiver $Q$ and the derived category $\mathcal{D}^b(\mathcal{P}(Q)-nil)$ else. Then $\mathcal{A}=\mathcal{P}(Q)-nil$ is an algebraic heart of a t-structure in $\mathcal{D}$ and it is well-known that we can tilt $\mathcal{A}$ indefinitely, i.e. the hearts obtained by any finite sequence of simple tilts of $\mathcal{A}$ have finite length with $n$ simple objects. Further, these simple objects are again 2-spherical.

\begin{defn}
\label{mgs}
Let $\mathcal{A}$ be an algebraic heart of a bounded t-structure of a triangulated category $\mathcal{D}$ that we can tilt indefinitely. We call a finite sequence of simple tilts of the heart $\mathcal{A}$ such that we strictly tilt at objects in $\mathcal{A}$ a \textit{green sequence of $\mathcal{A}$}. If the last heart in the sequence is the shifted heart $\mathcal{A}[-1]$ we call it a \textit{maximal green sequence}. We call the number of simple tilts in a green sequence its \textit{length}.
\end{defn}

Note that by Lemma \ref{composition} all simple objects of a heart $\mathcal{A}'$ appearing in a green sequence lie in $\mathcal{A}$ or $\mathcal{A}[-1]$. Since the endomorphism rings of simple objects in $\mathcal{A}'$ are skew fields they are indecomposable in $\mathcal{A}$. The set of bounded t-structures of a triangulated category $\mathcal{D}$ forms a poset: For two bounded-t-structures $\mathcal{C}_1, \mathcal{C}_2\subset \mathcal{D}$ we have $$\mathcal{C}_1\leq \mathcal{C}_2\Leftrightarrow \mathcal{C}_1\subset\mathcal{C}_2.$$ 

\begin{lem}
\label{prop1}
Let $\mathcal{D}$ be a triangulated category and $\mathcal{A}$ an algebraic heart of a bounded t-structure of $\mathcal{D}$ with simple objects $S_1,\ldots, S_n$ that we can tilt indefinitely. Then we have the following:
\begin{enumerate}
\item[(i)] We tilt in a green sequence of $\mathcal{A}$ at an indecomposable object of $\mathcal{A}$ at most once.
\item[(ii)] We tilt in a maximal green sequence of $\mathcal{A}$ at all $n$ simple objects $S_1,\ldots, S_n$ of $\mathcal{A}$.  
\end{enumerate} 
\end{lem}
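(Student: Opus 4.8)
The plan is to realize, via repeated use of Lemma \ref{composition}, every heart occurring in a green sequence as a single left tilt of the starting heart $\mathcal{A}$, and then read off (i) and (ii) from the resulting torsion pairs. Write the green sequence as $\mathcal{A}=\mathcal{A}_0,\mathcal{A}_1,\ldots,\mathcal{A}_m$, where $\mathcal{A}_{k+1}$ is the simple left tilt of $\mathcal{A}_k$ at a simple object $S^{(k)}$ of $\mathcal{A}_k$ which, by the defining property of a green sequence, lies in $\mathcal{A}$. I would first record the auxiliary fact that whenever $\mathcal{A}_k=\langle\mathcal{F}_k,\mathcal{T}_k[-1]\rangle$ is a left tilt of $\mathcal{A}$ at a torsion pair $(\mathcal{T}_k,\mathcal{F}_k)$ of $\mathcal{A}$, one has $\mathcal{A}_k\cap\mathcal{A}=\mathcal{F}_k$: an object $E\in\mathcal{A}_k$ has $\mathcal{A}$-cohomology concentrated in degrees $0$ and $1$ with $H^0(E)\in\mathcal{F}_k$ and $H^1(E)\in\mathcal{T}_k$, so $E\in\mathcal{A}$ forces $H^1(E)=0$, i.e. $E=H^0(E)\in\mathcal{F}_k$. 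Hence the simple object $S^{(k)}\in\mathcal{A}_k\cap\mathcal{A}$ lies in $\mathcal{F}_k$, and since $\mathcal{F}_k$ is extension-closed (and extensions of objects of $\mathcal{F}_k$ taken in $\mathcal{A}_k$ and in $\mathcal{A}$ agree), the torsion class $\langle S^{(k)}\rangle$ of the simple tilt is contained in $\mathcal{F}_k$. This is exactly the hypothesis of Lemma \ref{composition}, so by induction on $k$ (starting from $\mathcal{T}_0=0$, $\mathcal{F}_0=\mathcal{A}$) we obtain $\mathcal{A}_k=\langle\mathcal{F}_k,\mathcal{T}_k[-1]\rangle$ with $\mathcal{T}_{k+1}=\mathcal{T}_k\star\langle S^{(k)}\rangle\supseteq\mathcal{T}_k$ and $\mathcal{F}_{k+1}\subseteq\mathcal{F}_k$; in particular the torsion classes form an increasing chain and $S^{(k)}\in\mathcal{T}_{k+1}$. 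I expect the verification of this hypothesis — that a green sequence really only tilts at objects of $\mathcal{A}$, hence of $\mathcal{F}_k$ — to be the one point needing care; everything afterwards is formal.

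For (i), suppose a green sequence tilts at the same indecomposable object $X$ of $\mathcal{A}$ at two steps $a<b$, i.e. $S^{(a)}\cong X\cong S^{(b)}$ in $\mathcal{D}$. Then $X\in\mathcal{T}_{a+1}\subseteq\mathcal{T}_b$, whereas $X=S^{(b)}\in\mathcal{A}_b\cap\mathcal{A}=\mathcal{F}_b$. Since $(\mathcal{T}_b,\mathcal{F}_b)$ is a torsion pair in $\mathcal{A}$ we have $\mathcal{T}_b\cap\mathcal{F}_b=0$, hence $X=0$, which is absurd as $X$ is a simple object of $\mathcal{A}_a$.

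For (ii), let $\mathcal{A}=\mathcal{A}_0,\ldots,\mathcal{A}_m$ be a maximal green sequence, so $\mathcal{A}_m=\mathcal{A}[-1]$. Any $X\in\mathcal{F}_m$ lies in $\mathcal{A}$ and in $\mathcal{A}_m=\mathcal{A}[-1]$, so both $X$ and $X[1]$ lie in $\mathcal{A}$; this forces $X=0$. Thus $\mathcal{F}_m=0$ and $\mathcal{A}[-1]=\mathcal{A}_m=\langle\mathcal{F}_m,\mathcal{T}_m[-1]\rangle=\mathcal{T}_m[-1]$ (the latter because $\mathcal{T}_m$ is extension-closed), so $\mathcal{T}_m=\mathcal{A}$. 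Unwinding the recursion gives $\mathcal{A}=\mathcal{T}_m=\langle S^{(0)}\rangle\star\langle S^{(1)}\rangle\star\cdots\star\langle S^{(m-1)}\rangle$, so every object of $\mathcal{A}$ — in particular every simple object $S_i$ — admits a filtration whose successive quotients lie in the subcategories $\langle S^{(k)}\rangle$. Discarding zero quotients, a simple object can only carry a filtration of length one, so $S_i$ itself lies in some $\langle S^{(k)}\rangle$, i.e. is an iterated self-extension of $S^{(k)}$; being simple, $S_i\cong S^{(k)}$. Hence each of the $n$ simple objects $S_1,\ldots,S_n$ of $\mathcal{A}$ occurs among the objects $S^{(0)},\ldots,S^{(m-1)}$ at which the sequence tilts, which is (ii).
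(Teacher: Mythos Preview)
Your proof is correct. The torsion-pair bookkeeping you set up via Lemma~\ref{composition} is exactly what the paper also invokes (and what underlies Lemma~\ref{torsionclasses} later), and your argument for (i) is essentially the paper's argument translated into that language: the paper shows that once one tilts at $S$, the object $S[-1]$ persists in every later heart (because $\mathrm{Hom}(E,S[-1])=0$ for $E\in\mathcal{A}$) and hence $S$ cannot reappear; in your formulation this is the statement $S\in\mathcal{T}_{a+1}\subseteq\mathcal{T}_b$ together with $\mathcal{T}_b\cap\mathcal{F}_b=0$.

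For (ii) the two arguments are genuinely different. The paper argues locally: for each $i$ there is a step at which $S_i$ leaves the current heart and $S_i[-1]$ enters, and a short triangle chase shows that the simple one tilts at in that step must be $S_i$ itself. You argue globally: from $\mathcal{A}_m=\mathcal{A}[-1]$ you deduce $\mathcal{T}_m=\mathcal{A}$, unwind $\mathcal{T}_m$ as an iterated Gabriel product of the $\langle S^{(k)}\rangle$, and read off that every simple $S_i$ must coincide with some $S^{(k)}$. Your route is cleaner and avoids the cohomology computation the paper performs; the paper's route, on the other hand, pinpoints \emph{where} in the sequence each $S_i$ is tilted at, which is slightly more information. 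Either way the conclusion is the same.
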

\begin{proof}
Ad (i). Note that there are no non-zero morphisms from $\mathcal{A}[i]$ to $\mathcal{A}[j]$ for $i>j$. If we tilt a heart at an indecomposable $S$ then $S[-1]$ remains in all following hearts in the green sequence since $\mathrm{Hom}(E,S[-1])=0$ for $E\in\mathcal{A}$ by (\ref{simpletilt}). Then the claim follows from the fact that we can not have $S$ and $S[-1]$ in only one heart.\\
Ad (ii). By Lemma \ref{composition} any heart $\mathcal{A}'$ appearing in a green sequence is given by the tilt at some torsion pair $(\mathcal{T},\mathcal{F})$ in $\mathcal{A}$, i.e. $\mathcal{A}'=\left\langle \mathcal{F},\mathcal{T}[-1]\right\rangle$. Thus we have for every object $C\in\mathcal{A}$ a short exact sequence $$0\longrightarrow A\longrightarrow C\longrightarrow B \longrightarrow 0$$ with $A\in\mathcal{T}$ and $B\in\mathcal{F}$. Thus the object $S_i$ or the object $S_i[-1]$ lie in $\mathcal{A}'$ for all $i$. There must be two hearts coming after each other in this sequence such that $\mathcal{A}'$ contains the simple $S_i$ for $i=1,\ldots,n$ and the consecutive heart $\mathcal{A}'_{S'}$ obtained from tilting $\mathcal{A}'$ at some simple object $S'$ of $\mathcal{A}'$ contains the object $S_i[-1]$. Thus in this case we have the short exact sequence
\begin{align*}
0\longrightarrow E\longrightarrow S_i[-1]\longrightarrow F\longrightarrow 0
\end{align*}
in $\mathcal{A}'_{S'}$ with $E\in\left\langle S' \right\rangle^{\bot}\subset\mathcal{A}'$ and $F\in \left\langle  S'\right\rangle[-1]$. The morphism $ S_i[-1]\rightarrow F$ is non-zero, otherwise $S_i[-1]$ would be in $\left\langle S' \right\rangle^{\bot}\subset\mathcal{A}'$. But we can not have $S_i$ and $S_i[-1]$ in $\mathcal{A}'$. Thus there is a non-zero morphism $f: S_i\rightarrow F'$ in $\mathcal{A}$ with $F'=F[1]\in\left\langle S' \right\rangle\subset\mathcal{A}$. Since $S_i$ is a simple object in $\mathcal{A}$ $f$ is injective with cokernel $coker\ f$. From the exact triangle $$S_i\longrightarrow F'\longrightarrow E[2]\longrightarrow$$ follows that $coker\ f\cong E[2]$. Thus $E\cong 0$ since $E$ is generated by objects in $\mathcal{A}$ and $\mathcal{A}[-1]$ and we have $S_i\cong S'$.
\end{proof}

\begin{prop}
\label{prop2}
Let $Q$ be a quiver of Dynkin type with $m$ positive roots. Then any green sequence can be completed to a maximal green sequence of $\mathcal{P}(Q)$. The length of any maximal green sequence is $m$ and we tilt at $m$ objects $E_1,\ldots E_m$ whose classes are the $m$ positive roots.
\end{prop}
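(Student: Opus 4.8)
The plan is to translate green sequences of $\mathcal{A}:=\mathcal{P}(Q)-nil$ into strictly increasing chains of torsion classes, and then to use the combinatorics of such chains for the preprojective algebra of a Dynkin quiver. \emph{First}, I would record, by iterating Lemma~\ref{composition}, that every heart $\mathcal{A}'$ occurring in a green sequence of $\mathcal{A}$ has the form $\langle\mathcal{F},\mathcal{T}[-1]\rangle$ for a torsion pair $(\mathcal{T},\mathcal{F})$ in $\mathcal{A}$ with $\mathcal{F}=\mathcal{A}'\cap\mathcal{A}$, and that a strict green tilt at a simple object $E$ of such a heart lying in $\mathcal{A}$ --- so $E\in\mathcal{F}=\mathcal{T}^{\bot}$ and $\langle E\rangle\subset\mathcal{F}$ --- replaces $\mathcal{T}$ by the strictly larger torsion class $\mathcal{T}\star\langle E\rangle$. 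Since $\mathcal{A}$ is Krull--Schmidt and $\C$ is algebraically closed, $E$ is a brick, so $\underline{dim}\,E$ is a positive root by Lemma~\ref{cecotti}. Hence a green sequence of length $\ell$ is precisely a chain $0=\mathcal{T}_0\subsetneq\mathcal{T}_1\subsetneq\dots\subsetneq\mathcal{T}_\ell$ in the poset $\mathsf{tors}\,\mathcal{A}$ of torsion classes with $\mathcal{T}_i=\mathcal{T}_{i-1}\star\langle E_i\rangle$ for bricks $E_i$ (the $i$th object tilted at); it is maximal exactly when $\mathcal{T}_\ell=\mathcal{A}$ (equivalently the last heart is $\mathcal{A}[-1]$), and by Lemma~\ref{prop1}(i) the $E_i$ are pairwise non-isomorphic. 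Moreover each step is a covering relation in $\mathsf{tors}\,\mathcal{A}$: a torsion class strictly between $\mathcal{T}_{i-1}$ and $\mathcal{T}_{i-1}\star\langle E_i\rangle$ would contain the $\mathcal{T}_{i-1}$-torsion-free quotient of one of its objects outside $\mathcal{T}_{i-1}$ --- an object of $\mathrm{Filt}(E_i)$, hence one surjecting onto $E_i$ --- and would therefore contain $E_i$ and all of $\mathcal{T}_{i-1}\star\langle E_i\rangle$.

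\emph{Second}, I would prove completion. If a green sequence is not maximal, its last heart $\langle\mathcal{F},\mathcal{T}[-1]\rangle$ has $\mathcal{F}\neq0$. Its $n$ simple objects lie in $\mathcal{A}\cup\mathcal{A}[-1]$, and they cannot all lie in $\mathcal{A}[-1]$: else the heart, being their extension closure, would sit inside the extension-closed subcategory $\mathcal{A}[-1]$, and a heart of a bounded t-structure contained in another such heart must equal it --- if $\mathcal{H}_1\subset\mathcal{H}_2$, then $\mathrm{Hom}(\mathcal{H}_2,\mathcal{H}_2[k])=0=\mathrm{Hom}(\mathcal{H}_2[k],\mathcal{H}_2)$ for $k<0$ puts $\mathcal{H}_2$ in the aisle and co-aisle cut out by $\mathcal{H}_1$, so $\mathcal{H}_2\subset\mathcal{H}_1$ --- contradicting $\mathcal{F}\neq0$. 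So a simple object of the heart lies in $\mathcal{A}$, and tilting there is a further strict green step. Since $\mathcal{P}(Q)-mod$ has only finitely many bricks and, by Lemma~\ref{prop1}(i), no brick is tilted twice in a green sequence, iterating terminates; the resulting sequence is a maximal green sequence.

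\emph{Third}, I would determine the length. By the first paragraph a maximal green sequence is a saturated chain from $0$ to $\mathcal{A}$ in $\mathsf{tors}\,\mathcal{A}$ whose $\ell$ labels are pairwise non-isomorphic bricks with dimension vectors in $\Phi^{+}$ (the positive roots of $Q$). For Dynkin $Q$ the map $M\mapsto\underline{dim}\,M$ is a bijection from the bricks of $\mathcal{P}(Q)-mod$ onto the $m$ positive roots (cf.\ \cite{150}), so $\ell\leq m$. Conversely, by Mizuno's identification of $\mathsf{tors}\,\mathcal{A}$ with the weak order on the Weyl group $W$ of $Q$ (Theorem~4.1 in \cite{5}), this lattice is graded of rank $\ell(w_0)=|\Phi^{+}|=m$, so every saturated chain from its least to its greatest element has length $m$; thus $\ell=m$. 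Having $m$ distinct bricks among the $E_i$ out of exactly $m$ available, they exhaust all bricks of $\mathcal{P}(Q)-mod$ and $\{[E_1],\dots,[E_m]\}=\Phi^{+}$.

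The main obstacle is the third step: the combinatorics of chains of torsion classes, i.e.\ that the Hasse quiver of $\mathsf{tors}\,\mathcal{A}$ is the quiver of simple tilts with each cover labelled by a single brick (the brick labelling of Demonet--Iyama--Reading--Reiten--Thomas), together with the bijection between bricks of $\mathcal{P}(Q)-mod$ and positive roots in Dynkin type. Alternatively, one may argue along the lines of \cite{10}: a maximal green sequence is induced by a generic central charge $Z$ with $\mathrm{Im}\,Z(S_i)>0$ for all $i$, whose $Z$-stable objects in the Dynkin case are exactly the $m$ bricks, with dimension vectors running over the $m$ positive roots, tilted at in order of decreasing phase --- which gives $\ell=m$ and the description of the classes $[E_i]$ simultaneously.
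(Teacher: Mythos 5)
Your first two steps are sound and usefully fill in details the paper leaves implicit (that each strict tilt is a covering relation in the poset of torsion classes, and that a non-maximal green sequence always admits a further strict step). The completion argument matches the paper's. But the third step, which carries the real content of the proposition, rests on a false claim: the map $M\mapsto\underline{dim}\,M$ is \emph{not} a bijection from the bricks of $\mathcal{P}(Q)-mod$ onto the positive roots. Already for $Q=A_2$ there are two non-isomorphic bricks of dimension vector $(1,1)$ (the representation with $\phi_a\neq 0,\ \phi_{a^*}=0$ and the one with $\phi_a=0,\ \phi_{a^*}\neq 0$), so $\mathcal{P}(A_2)$ has four bricks while $m=3$. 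Lemma \ref{cecotti} gives only that the class of a brick is a positive root, not injectivity of the assignment. Consequently your bound $\ell\leq m$ via ``at most $m$ bricks'' fails, and, more seriously, the final deduction ``having $m$ distinct bricks out of exactly $m$ available, they exhaust all bricks and $\{[E_1],\dots,[E_m]\}=\Phi^{+}$'' collapses: a priori two of the $E_i$ could share a root class and some positive root could be missed. (Your $\ell=m$ does still follow from Mizuno's poset isomorphism together with the gradedness of the weak order, so that part survives, though it imports an external theorem where the paper argues directly.)

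The paper closes exactly this gap by a different device: at each tilt it chooses a central charge on the current heart making the tilted simple left-most in $\overline{\mathbb{H}}$, and invokes Proposition \ref{ikeda} to produce, for every positive root $\alpha$, a semistable object of class $\alpha$ in that heart; this object either is the tilted simple or survives into the next heart, and since no heart can contain objects of class $\alpha$ and of class $-\alpha$ simultaneously, each positive root is consumed exactly once before the sequence reaches $\mathcal{A}[-1]$. This yields both $\ell=m$ and the identification of the classes $[E_i]$ with $\Phi^{+}$ without any bijection between bricks and roots. Your alternative sketch ``along the lines of \cite{10}'' would also give the statement about classes, but it presupposes that \emph{every} maximal green sequence is induced by a discrete central charge --- the converse of Proposition \ref{cmm}, which is neither proved in the paper nor obvious. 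To repair the argument within your torsion-theoretic framework you would need the brick labelling of covers of torsion classes and its compatibility, under Mizuno's bijection, with the labelling of the weak order by positive roots --- precisely the machinery you name as ``the main obstacle'' but do not supply.
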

\begin{proof}
Note that we have only finitely many bricks in $\mathcal{P}(Q)-nil$. By Lemma \ref{composition} all simple objects of hearts appearing in a green sequence are of the form $E$ or $E[-1]$ with $E$ a brick in $\mathcal{A}$. Now the first claim follows from Lemma \ref{prop1}(i).\\
It remains to show the second claim. For this we will anticipate notions from the next section. Let $\mathcal{D}$ be the triangulated category $\mathcal{\hat{D}}$ described above. In a maximal green sequence we tilt at a simple module say $S_1$ of $\mathcal{A}=\mathcal{P}(Q)-mod$ first. Given an algebraic heart $\mathcal{A'}$ in $\mathcal{D}$ with $n$ simple objects $S_1,\ldots, S_n$ we can define a Bridgeland stability condition $\sigma=(Z,\mathcal{P})$ by choosing a complex number in the upper half-plane $\overline{\mathbb{H}}$ for any simple $S_1,\ldots, S_n$ by Lemma 5.2 in \cite{50}. This defines a central charge $Z$ on the Grothendieck group $K(\mathcal{A}')=K(\mathcal{D})$ and the subcategories $\mathcal{P}(\phi)$ of $\sigma$-semistable objects of phase $\phi$ with $\phi\in(0,1]$ are exactly the semistable objects of $\mathcal{A}'$ with respect to $Z$ together with the zero objects.  Let us choose a central charge on $\mathcal{P}(Q)-mod$ such that the central charge $Z(S_1)$ is left to all central charges $Z(S_2),\ldots,Z(S_n)$ in the upper halfplane $\overline{\mathbb{H}}$. Note that all roots are indivisible since there are no imaginary roots for a Dynkin quiver. By Proposition \ref{ikeda} for any positive root $\alpha$ there is a semistable module with class $\alpha$. With the chosen central charge there are two possibilities: 1. The central charge of a semistable module $E$ with class $[E]=\alpha$ lies right to $Z(S_1)$ in the upper halfplane and $E$ is contained in the tilted heart $\mathcal{A}_{S}$ by Lemma \ref{wellknown} and (\ref{simpletilt}). 2. $Z(E)$ and $Z(S_1)$ have the same phase and thus $E=S_1$ since in this case $E\in \left\langle S_1\right\rangle $ and the class $[E]$ is indivisible. Note that there can not be an object in a heart with class $\alpha$ if there is already an object with class $-\alpha$. We tilt next at a simple object $S'$ of $\mathcal{A}_{S_{1}}$. We can again choose a central charge such that is $S'$ left-most. By Proposition \ref{ikeda} in the tilted heart $\mathcal{A}_{S_{1}}$ there is a semistable object $E'$ with class a positive root $\beta$. By the same arguments we have $S'=E'$ or $E'$ is contained in the next heart in the sequence. Going on in this way we see that we tilt at $m$ indecomposables with classes the positive roots. Since an object we tilt at in a maximal green sequence is a brick in $\mathcal{P}(Q)-mod$ and thus has class a positive root we tilt exactly at $m$ objects with classes the positive roots. 
\end{proof}


\begin{prop}
\label{prop3}
Let $Q$ be a quiver without loops. If there exists a maximal green sequence of $\mathcal{P}(Q)$ then $Q$ is of Dynkin type.
\end{prop}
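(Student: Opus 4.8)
The plan is to prove the contrapositive: if $Q$ is not of Dynkin type, then $\mathcal{P}(Q)$ admits no maximal green sequence. The key obstruction to exploit is that for a non-Dynkin quiver the symmetric bilinear form $(\ ,\ )$ of \eqref{cartan} is no longer positive definite, so there exist imaginary roots, and — crucially — $\mathcal{P}(Q)-nil$ contains \emph{infinitely many} bricks (this is exactly where non-Dynkin differs from the Dynkin case of Lemma \ref{cecotti} and the remark following it). The strategy is to show that a maximal green sequence would force all of these bricks to be exhausted in finitely many steps, which is impossible.

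First I would record the structural facts already available. By Lemma \ref{composition}, any heart $\mathcal{A}'$ occurring in a green sequence of $\mathcal{A}=\mathcal{P}(Q)-nil$ has the form $\langle\mathcal{F},\mathcal{T}[-1]\rangle$ for a torsion pair $(\mathcal{T},\mathcal{F})$ in $\mathcal{A}$; its simple objects, having skew-field endomorphism rings, are bricks of $\mathcal{A}$ (up to the shift $[-1]$). By Lemma \ref{prop1}(i) we tilt at any fixed indecomposable of $\mathcal{A}$ at most once along a green sequence; hence a green sequence of length $N$ uses $N$ \emph{distinct} bricks of $\mathcal{A}$. The content of "maximal" is that the final heart is $\mathcal{A}[-1]$. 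So the heart of the plan is: show that reaching $\mathcal{A}[-1]$ requires tilting at \emph{every} brick of $\mathcal{A}$ (or at least at infinitely many of them), contradicting finiteness of the sequence.

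For that last point I would argue as in the proof of Lemma \ref{prop1}(ii): if $B$ is any brick in $\mathcal{A}$ and the green sequence ends at $\mathcal{A}[-1]$, then $B[-1]\in\mathcal{A}[-1]$, while $B\in\mathcal{A}$ is the starting heart; tracking $B$ through the torsion-pair description of the intermediate hearts (every intermediate $\mathcal{A}'=\langle\mathcal{F},\mathcal{T}[-1]\rangle$ puts $B$ either in $\mathcal{F}\subset\mathcal{A}'$ or, via its torsion subobject, forces a piece of it into $\mathcal{T}[-1]$), one finds a consecutive pair of hearts $\mathcal{A}'$, $\mathcal{A}'_{S'}$ where $B$ "crosses" from the $\mathcal{A}$-side to the $\mathcal{A}[-1]$-side, and then the exact-triangle argument of Lemma \ref{prop1}(ii) (using $K(\mathcal{D})=\mathbb{Z}Q_0$ and that the only morphisms $\mathcal{A}[i]\to\mathcal{A}[j]$ for $i>j$ vanish) forces $B\cong S'$, i.e.\ $B$ is itself tilted at. Since $Q$ is non-Dynkin there are infinitely many bricks $B$, so infinitely many tilts would be required, contradicting the finiteness of a green sequence. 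Therefore no maximal green sequence exists.

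The main obstacle I anticipate is the bookkeeping in the "crossing" step: making precise, in the non-Dynkin setting where the intermediate hearts need not be as rigidly controlled as for Dynkin, that \emph{every} brick of $\mathcal{A}$ (not merely every simple) must be tilted at before $\mathcal{A}[-1]$ is reached. One must be careful that the torsion-pair factorisation $0\to A\to B\to B/A\to 0$ of a brick $B$ behaves well — e.g.\ that if $B$ is not tilted at then $B$ survives wholesale into a later heart on the $\mathcal{A}$-side — and then combine this with Lemma \ref{prop1}(i) to get the contradiction. A clean alternative, if one prefers to avoid infinitely many bricks directly, is to invoke a central-charge / stability argument parallel to the proof of Proposition \ref{prop2}: a maximal green sequence is induced by a discrete central charge on $\mathcal{P}(Q)-nil$, and for non-Dynkin $Q$ an imaginary root gives a semistable object (a brick) whose class cannot be reached in finitely many tilts, since the positive cone is infinite; but the brick-counting argument above is the more self-contained route.
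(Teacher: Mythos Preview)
Your main approach has a genuine gap: the claim that every brick of $\mathcal{A}$ must be tilted at in a maximal green sequence is simply false, already in the Dynkin case. For $Q=A_2$ the category $\mathcal{P}(A_2)\text{-}mod$ contains four bricks --- the simples $S_1,S_2$ and two non-isomorphic bricks of dimension vector $(1,1)$, distinguished by which of $a,a^*$ acts nontrivially --- yet every maximal green sequence has length $3$ by Proposition~\ref{prop2}. So in any given maximal green sequence one of the two $(1,1)$-bricks is never tilted at. The reason the argument of Lemma~\ref{prop1}(ii) does not extend from simples to arbitrary bricks $B$ is twofold. First, the dichotomy ``either $B\in\mathcal{A}'$ or $B[-1]\in\mathcal{A}'$'' for an intermediate heart $\mathcal{A}'=\langle\mathcal{F},\mathcal{T}[-1]\rangle$ relies on a simple object having no nontrivial torsion decomposition; a brick $B$ may satisfy $0\subsetneq tB\subsetneq B$, in which case neither $B$ nor $B[-1]$ lies in $\mathcal{A}'$. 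Second, the exact-triangle step in Lemma~\ref{prop1}(ii) concludes $\mathrm{coker}\,f\cong E[2]$ by using that $f:S_i\to F'$ is injective in $\mathcal{A}$, which follows from $S_i$ being \emph{simple}; for a brick $B$ the analogous morphism need not be injective. You flag this as the ``main obstacle,'' but it is not bookkeeping --- it is where the argument actually breaks.

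Your alternative route --- the stability argument parallel to Proposition~\ref{prop2} --- is exactly what the paper does, and it is the correct one. The mechanism of the proof of Proposition~\ref{prop2}, via Proposition~\ref{ikeda}, shows that in every heart $\mathcal{A}'$ appearing in a green sequence there remain semistable objects whose classes are the positive indivisible roots not yet accounted for. For non-Dynkin $Q$ there are infinitely many positive indivisible roots, so every intermediate heart still contains infinitely many such objects; since $\mathcal{A}[-1]$ contains none, no finite sequence of simple tilts can reach it. So you should promote your ``clean alternative'' to the main argument and drop the brick-crossing approach.
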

\begin{proof}
This follows from the proof of Proposition \ref{prop2} since we have infinitely many indecomposable roots in the non-Dynkin case and thus have in any heart $\mathcal{A}'$ appearing in a green sequence of $\mathcal{P}(Q)$ infinitely many objects with class a positive indecomposable root.
\end{proof}

\begin{cor}
If $Q$ is a quiver without loops of non-Dynkin type, then there are infinitely many rigid bricks in $\mathcal{P}(Q)-nil$. 
\end{cor}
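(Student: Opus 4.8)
The plan is to realise the infinitely many rigid bricks as the objects at which one tilts along green sequences of $\mathcal{P}(Q)$, using that the non-Dynkin hypothesis rules out any bound on their number via Proposition~\ref{prop3}.

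First I would record that every object $S'$ at which one tilts along a green sequence of $\mathcal{P}(Q)$ is a rigid brick of $\mathcal{P}(Q)-nil$: such an $S'$ is a simple object of a heart obtained from $\mathcal{A}=\mathcal{P}(Q)-nil$ by finitely many simple tilts, hence is $2$-spherical in $\mathcal{D}$, and by the definition of a green sequence it lies in $\mathcal{A}$; therefore $\mathrm{Hom}_{\mathcal{A}}(S',S')=\mathrm{Hom}^{0}_{\mathcal{D}}(S',S')=\mathbb{C}$ and $Ext^{1}_{\mathcal{A}}(S',S')=\mathrm{Hom}^{1}_{\mathcal{D}}(S',S')=0$. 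Since, by Lemma~\ref{prop1}(i), one tilts at any indecomposable of $\mathcal{A}$ at most once along a green sequence, a green sequence of length $\ell$ produces $\ell$ pairwise non-isomorphic rigid bricks of $\mathcal{P}(Q)-nil$.

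Then I would argue by contradiction. Assume $\mathcal{P}(Q)-nil$ had only finitely many rigid bricks; by the previous step the lengths of green sequences of $\mathcal{P}(Q)$ would be bounded, so there would be a green sequence $\mathcal{A}=\mathcal{A}_{0}\to\cdots\to\mathcal{A}_{\ell}$ of maximal length (with $\ell\ge1$, since a single simple tilt of $\mathcal{A}$ is already a green sequence). It then suffices to show $\mathcal{A}_{\ell}=\mathcal{A}[-1]$, for then $\mathcal{A}_{0}\to\cdots\to\mathcal{A}_{\ell}$ is a maximal green sequence, whence Proposition~\ref{prop3} forces $Q$ to be of Dynkin type, contrary to hypothesis. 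By Lemma~\ref{composition} we may write $\mathcal{A}_{\ell}=\langle\mathcal{F},\mathcal{T}[-1]\rangle$ for a torsion pair $(\mathcal{T},\mathcal{F})$ of $\mathcal{A}$; if $\mathcal{F}=0$ then $\mathcal{A}_{\ell}=\mathcal{A}[-1]$ and we are done. If instead $\mathcal{F}\ne0$, I would pick $0\ne F\in\mathcal{F}\subset\mathcal{A}\cap\mathcal{A}_{\ell}$ together with a simple quotient $S'$ of $F$ in $\mathcal{A}_{\ell}$; were $S'$ in $\mathcal{A}[-1]$, the nonzero morphism $F\to S'$ would contradict $\mathrm{Hom}_{\mathcal{D}}(\mathcal{A},\mathcal{A}[-1])=0$, so $S'\in\mathcal{A}$, and then tilting $\mathcal{A}_{\ell}$ at the simple object $S'$ is a strict tilt at an object of $\mathcal{A}$, prolonging the green sequence and contradicting its maximality.

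The step I expect to be the real obstacle is this last one: showing that a green sequence not yet ending at $\mathcal{A}[-1]$ can always be prolonged, i.e.\ that the tilting process cannot get stuck. This is exactly where the description of the torsion-pair tilt (Lemmas~\ref{hrs} and~\ref{composition}) and the vanishing $\mathrm{Hom}_{\mathcal{D}}(\mathcal{A},\mathcal{A}[-1])=0$ are used; the rest is bookkeeping together with a direct appeal to Proposition~\ref{prop3}.
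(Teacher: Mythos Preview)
Your argument is correct and follows the same route as the paper: the objects one tilts at are rigid bricks (via $2$-sphericality), Lemma~\ref{prop1}(i) bounds green-sequence length if rigid bricks are finite, and Proposition~\ref{prop3} yields the contradiction. The paper's own proof is a one-line appeal to Lemma~\ref{prop1} and Proposition~\ref{prop3}; your version makes explicit the step the paper leaves implicit, namely that a green sequence of maximal length must end at $\mathcal{A}[-1]$ because otherwise the torsion-free part $\mathcal{F}$ of the corresponding torsion pair contains a simple of the tilted heart lying in $\mathcal{A}$ (your quotient argument, which is the torsion-free dual of the argument in the proof of Lemma~\ref{torsionclasses}), allowing a further tilt.
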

\begin{proof}
All indecomposables at that we tilt in a green sequence of $\mathcal{P}(Q)$ are rigid bricks. By Lemma \ref{prop1} and Proposition \ref{prop3} there have to be infinitely many rigid bricks.  
\end{proof}

By Lemma \ref{composition} a maximal green sequence of $\mathcal{P}(Q)$ for a Dynkin quiver $Q$ defines a sequence of torsion classes in $\mathcal{P}(Q)-mod$ ordered by inclusion. The set of torsion classes in $\mathcal{P}(Q)-mod$ with a relation given by inclusion is a partial ordered set. We give next a bijection between maximal chains in its Hasse quiver and maximal green sequences of $\mathcal{P}(Q)$. This will follow quickly from the results in \cite{135}.

\begin{defn}
We call an object $E$ in an algebraic heart $\mathcal{A}$ of a bounded t-structure of a triangulated category $\mathcal{D}$ \textit{endo-trivial}, if $\mathrm{Hom}_{\mathcal{A}}(E,E)$ is a skew field.
\end{defn}

An endo-trivial object is indecomposable. If $\mathcal{D}$ is a $k$-linear category over an algebraically closed field $k$, then $\mathrm{Hom}_{\mathcal{A}}(E,E)=k$ for an endo-trivial object $E$.
\vspace{0.2cm}\\
The following Lemma is a stronger version of Lemma 2.7 in \cite{135}:

\begin{lem}
\label{torsionclasses}
Let $\mathcal{A}$ be an algebraic heart of a bounded t-structure of a triangulated category $\mathcal{D}$ that we can tilt indefinitely. If the set of Grothendieck classes of endo-trivial objects in $\mathcal{A}$  is finite, then  $\mathcal{T}$ is of the form $(( \left\langle E_1\right\rangle \star \left\langle E_2\right\rangle )\star \cdots)\star \left\langle E_m\right\rangle $ where $E_1,\ldots, E_m$ are indecomposables at that we tilt in a green sequence of $\mathcal{A}$.
\end{lem}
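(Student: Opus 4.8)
The plan is to show that every torsion class $\mathcal{T}$ of $\mathcal{A}$ (the case $\mathcal{T}=0$ being trivial) sits at the top of a green sequence of $\mathcal{A}$, by stripping one simple tilt off $\mathcal{T}$ at a time, starting from below; the finiteness hypothesis is needed only to make the stripping process terminate. The one non-formal ingredient is an elementary identity, valid in any finite-length abelian category $\mathcal{C}$: if $\mathcal{U}$ is a torsion class of $\mathcal{C}$ and $S$ is a simple object with $S\in\mathcal{U}$, then $\mathcal{U}=\langle S\rangle\star(\mathcal{U}\cap\langle S\rangle^{\perp})$, where $\langle S\rangle^{\perp}=\{X:\mathrm{Hom}(S,X)=0\}$. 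Here $\supseteq$ is extension-closedness of $\mathcal{U}$, and for $\subseteq$ one takes, for $T\in\mathcal{U}$, the largest subobject $T'\subseteq T$ all of whose composition factors are $\cong S$ (it exists by finite length) and checks that $T/T'\in\mathcal{U}\cap\langle S\rangle^{\perp}$.

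With that, I would run the following recursion. Put $\mathcal{A}^{(0)}=\mathcal{A}$, $\mathcal{T}^{(0)}=\mathcal{T}$; inductively suppose we have reached an algebraic heart $\mathcal{A}^{(j)}$ by a green sequence of $\mathcal{A}$, and a torsion class $\mathcal{T}^{(j)}$ of $\mathcal{A}^{(j)}$ that, viewed in $\mathcal{D}$, lies inside $\mathcal{A}$. If $\mathcal{T}^{(j)}=0$ we stop with $m=j$. Otherwise, since $\mathcal{A}^{(j)}$ has finite length and $\mathcal{T}^{(j)}$ is closed under quotients, a simple quotient of any nonzero object of $\mathcal{T}^{(j)}$ gives a simple object $E_{j+1}$ of $\mathcal{A}^{(j)}$ with $E_{j+1}\in\mathcal{T}^{(j)}\subseteq\mathcal{A}$; I then tilt: let $\mathcal{A}^{(j+1)}$ be the simple left tilt of $\mathcal{A}^{(j)}$ at $E_{j+1}$. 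This is a nontrivial simple tilt at an object of $\mathcal{A}$, it is again algebraic because $\mathcal{A}$ can be tilted indefinitely, and $E_{j+1}$ is endo-trivial (its $\mathcal{D}$-endomorphism ring is a skew field), hence indecomposable. Finally set $\mathcal{T}^{(j+1)}=\mathcal{T}^{(j)}\cap\langle E_{j+1}\rangle^{\perp}$, still inside $\mathcal{A}$.

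Two points have to be checked to keep the recursion going. First, $\mathcal{T}^{(j+1)}$ must again be a torsion class, of $\mathcal{A}^{(j+1)}$ this time. This is the compatibility of HRS tilting with the lattice of torsion classes: a simple tilt at $E_{j+1}$ induces an inclusion-preserving bijection between the torsion classes of $\mathcal{A}^{(j)}$ containing $\langle E_{j+1}\rangle$ and those of $\mathcal{A}^{(j+1)}$ contained in $\langle E_{j+1}\rangle^{\perp}$, sending $\mathcal{U}$ to $\mathcal{U}\cap\langle E_{j+1}\rangle^{\perp}$, and since $E_{j+1}\in\mathcal{T}^{(j)}$ it applies to $\mathcal{U}=\mathcal{T}^{(j)}$; I would either extract this from Lemma \ref{composition} together with the identity above, or cite it from \cite{135}. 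Second, the recursion must stop. Because $E_{j+1}\in\mathcal{T}^{(j)}$ but $E_{j+1}\notin\langle E_{j+1}\rangle^{\perp}$, the $\mathcal{T}^{(j)}$ form a strictly decreasing chain of torsion classes of $\mathcal{A}$, and each step deletes the endo-trivial object $E_{j+1}$; by Lemma \ref{prop1}(i) the $E_j$ are moreover pairwise non-isomorphic. The hypothesis that there are only finitely many Grothendieck classes of endo-trivial objects of $\mathcal{A}$ rules out an infinite such chain, so the process halts after finitely many steps $m$.

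When it halts, iterating the identity in the hearts $\mathcal{A}^{(0)},\dots,\mathcal{A}^{(m-1)}$ yields $\mathcal{T}=\langle E_1\rangle\star\mathcal{T}^{(1)}=\dots=\langle E_1\rangle\star\langle E_2\rangle\star\cdots\star\langle E_m\rangle\star\mathcal{T}^{(m)}$ with $\mathcal{T}^{(m)}=0$; here one notes that $\langle E_{j+1}\rangle$, $\langle E_{j+1}\rangle^{\perp}$ and the Gabriel products appearing are unchanged whether computed in $\mathcal{A}^{(j)}$ or in $\mathcal{A}$, because $E_{j+1}\in\mathcal{A}$ and hearts are closed under extensions in $\mathcal{D}$. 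By associativity of $\star$ this is the asserted form $((\langle E_1\rangle\star\langle E_2\rangle)\star\cdots)\star\langle E_m\rangle$, and $E_1,\dots,E_m$ are by construction the indecomposables at which we tilt in the green sequence $\mathcal{A}=\mathcal{A}^{(0)}\to\cdots\to\mathcal{A}^{(m)}$. The part I expect to be the real obstacle is this termination issue: one must know that finitely many Grothendieck \emph{classes} --- not isomorphism classes --- of endo-trivial objects already preclude infinite descending chains of torsion classes, which is exactly where the lemma sharpens Lemma 2.7 of \cite{135}; the remainder is formal manipulation with Proposition \ref{hrs} and Lemmas \ref{composition} and \ref{prop1}.
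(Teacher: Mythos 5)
Your recursion coincides with the paper's proof: pick a simple quotient $E_{j+1}$ of a nonzero object of the current torsion class, tilt at it, replace the torsion class by its intersection with $\langle E_{j+1}\rangle^{\perp}$, and reassemble the Gabriel product via the identity $\mathcal{U}=\langle S\rangle\star(\mathcal{U}\cap\langle S\rangle^{\perp})$ -- this identity, together with the check that the resulting $\langle E_1,\ldots,E_m\rangle$ exhausts $\mathcal{T}$, is exactly what the paper extracts from Lemma 2.7 of \cite{135}. That part of your write-up is correct and is not a different route.

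The genuine gap is the termination step, which you correctly flag as ``the real obstacle'' but do not close. As written, ``the $E_j$ are pairwise non-isomorphic'' combined with ``finitely many Grothendieck classes of endo-trivial objects'' is a non sequitur: a priori infinitely many pairwise non-isomorphic bricks could share a single Grothendieck class. What is needed, and what the proofs of Lemma \ref{prop1}(i) and Proposition \ref{prop2} supply, is that the classes $[E_1],[E_2],\ldots$ are pairwise \emph{distinct}. By Lemma \ref{composition} the $k$-th heart in your green sequence is the tilt of $\mathcal{A}$ at a torsion pair $(\mathcal{T}_k,\mathcal{F}_k)$ with $E_j\in\mathcal{T}_k$ for all $j\le k$, so $E_j[-1]$, of class $-[E_j]$, lies in every heart after the $j$-th tilt. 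On the other hand each such heart is algebraic with $n$ simple objects whose classes form a basis of $K(\mathcal{D})\cong\mathbb{Z}^n$, so the classes of its nonzero objects lie in the strictly convex cone spanned by that basis and the heart cannot contain objects of class $\alpha$ and of class $-\alpha$ simultaneously. Hence no heart occurring after the $j$-th tilt contains an object of class $[E_j]$, giving $[E_k]\neq[E_j]$ for $k>j$; since each $E_k$ is an endo-trivial object of $\mathcal{A}$, the finiteness of the set of Grothendieck classes of such objects bounds $m$ and the recursion halts. With this inserted, your argument agrees with the paper's.
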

\begin{proof}
The endomorphism ring of all simple objects of a heart that appears in a green sequence is a skew field. If there are only finitely many Grothendieck classes of indecomposables $E$ such that the endomorphism $\mathrm{Hom}_{\mathcal{A}}(E,E)$ is a skew field then any continued green sequence is a maximal green sequence by the proof of Lemma \ref{prop1}. Now the result follows similar to the proof of Lemma 2.7 in \cite{135}: Given a (non-trivial) torsion class $\mathcal{T}_0:=\mathcal{T}$ in $\mathcal{A}$ there is at least one simple object $E_1$ in $\mathcal{T}_0$ since torsion classes are closed under quotients and $\mathcal{A}$ is algebraic. Then $\mathcal{T}_1:=\mathcal{T}_{0}\cap \left\langle E_1\right\rangle^{\bot}$ is a torsion class in the tilted heart $\mathcal{A}'=\left\langle \left\langle E_1\right\rangle ^{\bot}, \left\langle E_1\right\rangle [-1] \right\rangle$ and in particular $\mathcal{T}_{1}\subset\mathcal{T}$. $\mathcal{T}_{1}$ is trivial or there is a non-zero epimorphism to a simple object $E_2$ of $\mathcal{A}'$ that is an element of $\mathcal{A}$. Going on in this way we will get a trivial torsion class $\mathcal{T}_{N}=0$ after a finite number $N$ of steps. By Lemma 2.7 in \cite{135} we get a torsion class $\left\langle E_1,E_2\ldots, E_{N-1}\right\rangle\subset\mathcal{T}_{0}$ in $\mathcal{A}$. We have $\left\langle E_1,E_2\ldots, E_{N-1}\right\rangle=\mathcal{T}_{0}$ otherwise there is an object $F\in\mathcal{T}_{0}$ but not in $\left\langle E_1,E_2\ldots, E_{N-1}\right\rangle$. Since $\left\langle E_1,E_2\ldots, E_{N-1}\right\rangle$ is a torsion class we have a short exact sequence $$0\longrightarrow A \longrightarrow F\longrightarrow B\longrightarrow 0$$ in $\mathcal{A}$ with $A\in\left\langle E_1,E_2\ldots, E_{N-1}\right\rangle$ and $B\in\left\langle E_1,E_2\ldots, E_{N-1}\right\rangle^{\bot}$. $B$ is an element of $\mathcal{T}_{0}$ since it is a quotient of $F\in\mathcal{T}_{0}$. Thus there is a non-zero object $B\in\mathcal{T}_{0}\cap\left\langle E_1,E_2\ldots, E_{N-1}\right\rangle^{\bot}=\mathcal{T}_{N}$. This is a contradiction. Applying Lemma \ref{composition} finishes the proof.
\end{proof}

E.g., the Grothendieck classes of bricks for the preprojective algebra of a Dynkin quiver are the finitely many positive roots.

\begin{cor}
\label{torsionclasses2}
Let $\mathcal{A}$ be an algebraic heart of a bounded t-structure of a triangulated category $\mathcal{D}$ that we can tilt indefinitely. If there are finitely many endo-trivial objects in $\mathcal{A}$, then there are finitely many torsion classes in $\mathcal{A}$. 
\end{cor}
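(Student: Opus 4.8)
The plan is to obtain this as a purely finitary consequence of Lemma \ref{torsionclasses} together with Lemma \ref{prop1}(i). If $\mathcal{A}$ has only finitely many endo-trivial objects then, a fortiori, the set of Grothendieck classes of endo-trivial objects in $\mathcal{A}$ is finite, so the hypothesis of Lemma \ref{torsionclasses} is satisfied. Hence every torsion class $\mathcal{T}$ in $\mathcal{A}$ can be written in the form $(( \langle E_1\rangle \star \langle E_2\rangle)\star\cdots)\star\langle E_m\rangle$, where $E_1,\dots,E_m$ are the indecomposables at which one tilts in the green sequence of $\mathcal{A}$ attached to $\mathcal{T}$ in the proof of that lemma.

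The next step is to check that each $E_i$ belongs to the finite set of endo-trivial objects of $\mathcal{A}$: each $E_i$ is a simple object of some heart $\mathcal{A}'$ occurring in that green sequence \emph{and} lies in $\mathcal{A}$, so $\mathrm{Hom}_{\mathcal{A}}(E_i,E_i)=\mathrm{Hom}_{\mathcal{D}}(E_i,E_i)=\mathrm{End}_{\mathcal{A}'}(E_i)$ is a skew field. Moreover, by Lemma \ref{prop1}(i) one tilts at a given indecomposable of $\mathcal{A}$ at most once along a green sequence, so $E_1,\dots,E_m$ are pairwise distinct; in particular $m$ is bounded by the number $N$ of endo-trivial objects of $\mathcal{A}$.

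It then remains to count. There are only finitely many tuples $(E_1,\dots,E_m)$ of pairwise distinct endo-trivial objects with $m\le N$, and by the above every torsion class of $\mathcal{A}$ equals the iterated Gabriel product $(( \langle E_1\rangle\star\langle E_2\rangle)\star\cdots)\star\langle E_m\rangle$ of such a tuple. Thus the assignment sending a tuple to this Gabriel product is a surjection from a finite set onto the set of torsion classes of $\mathcal{A}$, which is therefore finite. I do not expect a genuine obstacle here; the real content sits in Lemma \ref{torsionclasses}, and the only points needing a line of justification are that the $E_i$ produced there are endo-trivial objects of $\mathcal{A}$ itself (not merely of a tilted heart) and that their number is bounded — both handled above.
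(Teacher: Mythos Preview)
Your proof is correct and follows essentially the same route as the paper: invoke Lemma~\ref{torsionclasses} to express every torsion class in terms of endo-trivial objects of $\mathcal{A}$, then count. The paper's version is slightly more economical because it uses the extension-closure form $\langle E_1,\dots,E_m\rangle$, which depends only on the underlying \emph{set} of $E_i$'s, so the length bound via Lemma~\ref{prop1}(i) becomes unnecessary.
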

\begin{proof}
By Lemma \ref{torsionclasses} any torsion class is of the form $\left\langle E_1,E_2\ldots\right\rangle$ for indecomposables $E_i$ with $\mathrm{Hom}_{\mathcal{A}}(E_i,E_i)$ a skew field. Thus there are finitely many torsion classes in $\mathcal{A}$.           
\end{proof}

\begin{rem}
A module category for a finite-dimensional algebra over a algebraically closed field $k$ with finitely many bricks, i.e. modules  $E$ with $\mathrm{Hom}(E,E)=k$, has finitely many torsion classes.
\end{rem}

Let $\mathfrak{H}^{\mathcal{A}}(\mathrm{tors)}$ be the Hasse quiver of the poset of torsion classes in the heart $\mathcal{A}$ of a bounded t-structure of a triangulated category $\mathcal{D}$ with relation given by inclusion: For two torsion classes $\mathcal{T}_{1}$ und $\mathcal{T}_{2}$ we have $\mathcal{T}_{1}\leq\mathcal{T}_{2}\Leftrightarrow \mathcal{T}_{1}\subset\mathcal{T}_{2}$. We can identify $\mathfrak{H}^{\mathcal{A}}(\mathrm{tors)}$ with the Hasse quiver of the poset of bounded t-structures $\mathcal{C}'\subset\mathcal{D}$ such $\mathcal{C}\leq\mathcal{C}'\leq\mathcal{C}[-1]$ where $\mathcal{C}$ is the t-structure associated to the heart $\mathcal{A}$ using the construction in Proposition \ref{hrs} by Proposition 2.3 in \cite{135}. We call these t-structures \textit{intermediate}. In the situation of Lemma \ref{torsionclasses} the hearts of all intermediate t-structures are algebraic. 

\begin{defn}
Let $\mathcal{A}$ be an algebraic heart of a bounded t-structure of a triangulated category $\mathcal{D}$ that we can tilt indefinitely. The \textit{exchange quiver} of $\mathcal{A}$ has as vertices the intermediate t-structures and we draw an arrow from the t-structure $\mathcal{C}_{1}$ to the t-structure $\mathcal{C}_{2}$ if we obtain $\mathcal{C}_{2}$ from $\mathcal{C}_{1}$ by a simple (left) tilt.
\end{defn}

\begin{prop}
\label{prop4}
Let $\mathcal{A}$ be an algebraic heart of a bounded t-structure of a triangulated category $\mathcal{D}$ that we can tilt indefinitely. If there are only finitely many Grothendieck classes of endo-trivial objects or torsion classes in $\mathcal{A}$, then $\mathfrak{H}^{\mathcal{A}}(\mathrm{tors)}$ coincides with the opposite quiver\footnote{The opposite quiver is the quiver with all arrows reversed.} of the \textit{exchange quiver} of $\mathcal{A}$.
\end{prop}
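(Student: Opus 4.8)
The plan is to build on the identification, recalled just above, of $\mathfrak{H}^{\mathcal{A}}(\mathrm{tors})$ with the Hasse quiver of the poset of intermediate t-structures (Proposition 2.3 in \cite{135}): sending a torsion class $\mathcal{T}$ to the heart $\mathcal{A}_{\mathcal{T}}:=\langle\mathcal{T}^{\bot},\mathcal{T}[-1]\rangle$ is an inclusion-preserving bijection onto the intermediate t-structures, so the exchange quiver and $\mathfrak{H}^{\mathcal{A}}(\mathrm{tors})$ have the same vertex set and only their arrows have to be matched. Under either finiteness hypothesis we are in the situation of Lemma \ref{torsionclasses}, so the hearts of all intermediate t-structures are algebraic. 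I would then prove two things: (1) if $\mathcal{C}_{2}$ is a simple left tilt of $\mathcal{C}_{1}$ and both are intermediate, then the associated torsion classes satisfy $\mathcal{T}_{1}\subsetneq\mathcal{T}_{2}$ and $\mathcal{T}_{2}$ covers $\mathcal{T}_{1}$; (2) conversely, every covering relation $\mathcal{T}_{1}\subsetneq\mathcal{T}_{2}$ of torsion classes arises from a simple left tilt $\mathcal{C}_{1}\to\mathcal{C}_{2}$. Since a simple left tilt is oriented by the exchange quiver from the smaller of the two torsion classes to the larger, while $\mathfrak{H}^{\mathcal{A}}(\mathrm{tors})$ orients the corresponding covering relation the other way, (1) and (2) together say exactly that the two quivers are opposite.

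To prove (1) and (2) I would fix a torsion class $\mathcal{T}_{1}$, put $\mathcal{F}_{1}=\mathcal{T}_{1}^{\bot}$, and work inside $\mathcal{A}_{1}:=\mathcal{A}_{\mathcal{T}_{1}}$; by Proposition \ref{hrs} the pair $(\mathcal{F}_{1},\mathcal{T}_{1}[-1])$ is a torsion pair in $\mathcal{A}_{1}$, so $\mathcal{F}_{1}$ is a torsion class of $\mathcal{A}_{1}$. The central step is a relative correspondence: $\mathcal{T}\mapsto\mathcal{T}\cap\mathcal{F}_{1}$ and $\mathcal{V}\mapsto\mathcal{T}_{1}\star\mathcal{V}$ are mutually inverse, inclusion-preserving bijections between the torsion classes $\mathcal{T}$ of $\mathcal{A}$ with $\mathcal{T}_{1}\subseteq\mathcal{T}$ and the torsion classes $\mathcal{V}$ of $\mathcal{A}_{1}$ with $\mathcal{V}\subseteq\mathcal{F}_{1}$ --- with $\mathcal{T}=\mathcal{T}_{1}\star(\mathcal{T}\cap\mathcal{F}_{1})$ coming from the $(\mathcal{T}_{1},\mathcal{F}_{1})$-torsion decomposition in $\mathcal{A}$, and $\mathcal{T}_{1}\star\mathcal{V}$ identified through Lemma \ref{composition} with the torsion class of $\mathcal{A}$ attached to the tilt of $\mathcal{A}_{1}$ at $(\mathcal{V},\mathcal{V}^{\bot})$. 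Under this correspondence the covers of $\mathcal{T}_{1}$ become the minimal nonzero torsion classes of $\mathcal{A}_{1}$ contained in $\mathcal{F}_{1}$; and since $\mathcal{A}_{1}$ has finite length, such a class contains a simple quotient $S$ of one of its objects, hence contains the torsion class $\langle S\rangle$ consisting of the objects of $\mathcal{A}_{1}$ with all composition factors $\cong S$, so a minimal one equals $\langle S\rangle$ for a simple object $S$ of $\mathcal{A}_{1}$ lying in $\mathcal{F}_{1}\subseteq\mathcal{A}$ --- that is, $\mathcal{T}_{2}=\mathcal{T}_{1}\star\langle S\rangle$ is the simple left tilt of $\mathcal{A}_{1}$ at $S$, which gives (2). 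The same observation, that $\langle S\rangle$ admits no proper nonzero torsion subclass, gives (1). Finally I would remark that tilting $\mathcal{A}_{1}$ at a simple object \emph{not} in $\mathcal{A}$ shifts that object into $\mathcal{A}[-2]$ and hence leaves the intermediate range, so the arrows of the exchange quiver between intermediate t-structures are precisely the ones just analysed and always point to the larger torsion class.

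The main obstacle I anticipate is establishing the relative correspondence of the previous paragraph --- in particular, checking that $\mathcal{T}\cap\mathcal{F}_{1}$ is really a torsion class of the tilted heart $\mathcal{A}_{1}$ (not only of $\mathcal{A}$) and that the two assignments invert one another. This is the one place where one must carefully relate the abelian structures of $\mathcal{A}$ and of $\mathcal{A}_{1}$, reading off kernels and cokernels in the two hearts from the long exact $\mathcal{A}$-cohomology sequence of a triangle and using that $\mathcal{F}_{1}$ is closed under both extensions and quotients in $\mathcal{A}_{1}$. Once this is in place, the rest is a routine application of Lemma \ref{composition}, Proposition \ref{hrs}, the finite length of intermediate hearts supplied by Lemma \ref{torsionclasses}, and the orientation bookkeeping.
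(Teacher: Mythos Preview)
Your proposal is correct and reaches the same conclusion, but the route differs from the paper's in an instructive way.

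The paper handles the two directions separately and more concretely. For (1) it fixes $\mathcal{A}_{1}=\langle\mathcal{F},\mathcal{T}[-1]\rangle$, notes that the simple $S'$ one tilts at must lie in $\mathcal{F}$ (since the tilt stays intermediate), so $\mathcal{T}'=\mathcal{T}\star\langle S'\rangle$ by Lemma~\ref{composition}; it then shows directly that any torsion class $\mathcal{T}''$ with $\mathcal{T}\subsetneq\mathcal{T}''\subseteq\mathcal{T}'$ already contains $S'$ (take $t'\in\mathcal{T}''\setminus\mathcal{T}$, decompose it via $(\mathcal{T},\langle S'\rangle)$, and project to a copy of $S'$), hence equals $\mathcal{T}'$. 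For (2) the paper does \emph{not} set up your relative correspondence; instead it invokes (the proof of) Lemma~\ref{torsionclasses}: under either finiteness hypothesis every torsion class is $\langle E_{1},\ldots,E_{m}\rangle$ for objects arising along a green sequence, so a cover $\mathcal{T}_{1}\lessdot\mathcal{T}_{2}$ forces $\mathcal{T}_{2}=\langle E_{1},\ldots,E_{m},E_{m+1}\rangle$, and the associated hearts differ by the simple tilt at $E_{m+1}$ (Lemma~2.7 of \cite{135}).

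Your approach packages both directions into a single tool: the order-isomorphism $\mathcal{T}\mapsto\mathcal{T}\cap\mathcal{F}_{1}$, $\mathcal{V}\mapsto\mathcal{T}_{1}\star\mathcal{V}$ between $[\mathcal{T}_{1},\mathcal{A}]$ and the torsion classes of $\mathcal{A}_{1}$ contained in $\mathcal{F}_{1}$, so that covers of $\mathcal{T}_{1}$ correspond to minimal nonzero torsion classes of $\mathcal{A}_{1}$ inside $\mathcal{F}_{1}$, which in a finite-length category are precisely the $\langle S\rangle$ for $S$ simple. This is the viewpoint now standard in the lattice-of-torsion-classes literature and is arguably cleaner and more reusable; it also makes explicit the point (only implicit in the paper) that tilting at a simple of $\mathcal{A}_{1}$ lying in $\mathcal{T}_{1}[-1]$ leaves the intermediate range. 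The trade-off is exactly the one you flag: verifying that $\mathcal{T}\cap\mathcal{F}_{1}$ is a torsion class of the tilted heart and that the two maps are mutually inverse requires a careful cohomology-sequence computation, whereas the paper sidesteps this by leaning on the green-sequence description from Lemma~\ref{torsionclasses}. Both arguments use the finiteness hypothesis at the same place---to guarantee that intermediate hearts are algebraic (equivalently, that the descending procedure in the proof of Lemma~\ref{torsionclasses} terminates).
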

\begin{proof}
Let $\mathcal{A}_1$ and $\mathcal{A}_2$ be two hearts of intermediate t-structures such that $\mathcal{A}_2$ is obtained from $\mathcal{A}_1$ by a simple tilt at $S'\in\mathcal{A}_1$. There is some torsion pair $(\mathcal{T},\mathcal{F})$ in $\mathcal{A}$ with $\mathcal{A}_1=\left\langle \mathcal{F},\mathcal{T}[-1]\right\rangle $. We have $S'\in \mathcal{F}$ and thus $\mathcal{A}_2=\left\langle \mathcal{F}',\mathcal{T}'[-1]\right\rangle $ for the torsion class $\mathcal{T}'= \mathcal{T}\star\left\langle S'\right\rangle$ in $\mathcal{A}$ with $\mathcal{F}'=\mathcal{T}'^{\bot}$ by Lemma \ref{composition}. If there is a torsion class $\mathcal{T}''$ with $\mathcal{T}\subsetneq \mathcal{T}''\subset\mathcal{T}'$ then there is an object $t'\in\mathcal{T}''\setminus{{\mathcal{T}}}$ with $$0\longrightarrow t\longrightarrow t'\longrightarrow b\longrightarrow 0$$ where $t\in\mathcal{T}$ and $b\in\left\langle S'\right\rangle$. Thus there is an epimorphism $t\twoheadrightarrow S'$ and we have $S'\in\mathcal{T}''$ and $\mathcal{T}''=\mathcal{T}'$.\\
If there are only finitely many torsion classes then any continued green sequence is a maximal green sequence. It follows from the proof of Lemma \ref{torsionclasses} that there are indecomposables $E_1, \ldots ,E_m$ such that $\mathcal{T}=\left\langle E_1,\ldots, E_m\right\rangle $ for any torsion class $\mathcal{T}$ in $\mathcal{A}$. Let $\mathcal{T}_1$ and $\mathcal{T}_2$ be two torsion classes with $\mathcal{T}_1<\mathcal{T}_2$ in $\mathfrak{H}^{\mathcal{A}}(\mathrm{tors)}$. Then we have $\mathcal{T}_{1}=\left\langle E_1,\ldots, E_m\right\rangle $ and $\mathcal{T}_2=\left\langle E_1,\ldots, E_m,E_{m+1}\right\rangle $ for an indecomposable $E_{m+1}$ since  $\mathcal{T}_1\subset\mathcal{T}_2$ and there is no torsion class in between $\mathcal{T}_1$ and $\mathcal{T}_2$. Thus the hearts associated to the torsion classes $\mathcal{T}_1$ and $\mathcal{T}_2$ are related by a simple tilt at $E_{m+1}$ by Lemma 2.7 in \cite{135}.            
\end{proof}

In the case of $\mathcal{A}=\mathcal{P}(Q)-mod$ for a Dynkin quiver $Q$ we have the following classification of maximal green sequences of $\mathcal{P}(Q)$:

\begin{cor}
\label{cor2}
Let $Q$ be a Dynkin quiver. Then there are bijections between the following objects:
\begin{enumerate}
\item[(i)] The set of maximal green sequences of $\mathcal{P}(Q)$.
\item[(ii)] Maximal chains between the trivial element $e$ and the longest element $w_0$ in the Hasse quiver of the Weyl group $W_Q$ of $Q$ with respect to the weak right (Bruhat) order.
\item[(iii)] Maximal chains in the Hasse quiver between the trivial torsion class $0$ and the torsion class $\mathcal{T}=\mathcal{P}(Q)-mod$.    
\item[(iv)] Maximal chains in the Hasse quiver of basic support $\tau$ tilting modules bet-ween the modules $0$ and $\mathcal{P}(Q)$.   
\end{enumerate}
\end{cor}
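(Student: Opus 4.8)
\textit{Proof proposal.} The plan is to route everything through (iii): first I would set up a bijection between the maximal green sequences of $\mathcal{P}(Q)$ and the maximal chains in the Hasse quiver of torsion classes between $0$ and $\mathcal{P}(Q)-mod$, and then deduce the bijections with (ii) and (iv) by invoking Y. Mizuno's identification of the three Hasse quivers. As a preliminary, note that since $Q$ is Dynkin, $\mathcal{P}(Q)$ is finite-dimensional and $\mathcal{P}(Q)-mod=\mathcal{P}(Q)-nil=:\mathcal{A}$ is an algebraic heart in $\mathcal{D}=\mathcal{\hat{D}}$ that we can tilt indefinitely; because $\mathcal{D}$ is $\mathbb{C}$-linear over the algebraically closed field $\mathbb{C}$, the endo-trivial objects of $\mathcal{A}$ are exactly the bricks, and by Lemma \ref{cecotti} (and the remark after it) every brick is rigid, so there are only finitely many of them. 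Hence the hypothesis of Corollary \ref{torsionclasses2} and Proposition \ref{prop4} holds, and $\mathcal{A}$ has finitely many torsion classes.

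For the bijection (i) $\leftrightarrow$ (iii), I would first observe, from the HRS construction of Proposition \ref{hrs}, that the trivial torsion pair $(0,\mathcal{A})$ returns $\mathcal{A}$ while $(\mathcal{A},0)$ returns $\mathcal{A}[-1]$, so among the intermediate t-structures $\mathcal{A}$ corresponds to the torsion class $0$ and $\mathcal{A}[-1]$ to $\mathcal{P}(Q)-mod$. By Definition \ref{mgs} together with Lemma \ref{composition}, a maximal green sequence of $\mathcal{A}$ is exactly a directed path in the exchange quiver of $\mathcal{A}$ from $\mathcal{A}$ to $\mathcal{A}[-1]$ whose arrows are simple tilts at objects of $\mathcal{A}$ (automatic, since the tilting object lies in the torsion-free class $\mathcal{F}\subset\mathcal{A}$). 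By Proposition \ref{prop4} the exchange quiver is the opposite of $\mathfrak{H}^{\mathcal{A}}(\mathrm{tors})$, and, as in the proof of Proposition \ref{prop4} (following Lemma 2.7 of \cite{135}), each such arrow is a covering relation of torsion classes and every covering relation arises this way. I would then check that a maximal green sequence carries the same information as a saturated chain $0=\mathcal{T}_0\subsetneq\cdots\subsetneq\mathcal{T}_m=\mathcal{P}(Q)-mod$: the heart at each stage is determined by its torsion class (Proposition \ref{hrs}) and the simple tilts are determined by consecutive hearts, giving injectivity; surjectivity follows because the set of torsion classes is finite, so every green sequence continues to a maximal one (proof of Lemma \ref{prop1}, cf. Proposition \ref{prop2}) and no chain stalls before the top. (Consistently, both sides have length $m$, the number of positive roots, by Proposition \ref{prop2}.)

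For (iii) $\leftrightarrow$ (ii) and (iii) $\leftrightarrow$ (iv), I would simply cite Mizuno's Theorem 4.1 in \cite{5}: for Dynkin $Q$ the Hasse quivers of the poset of torsion classes in $\mathcal{P}(Q)-nil$, of the poset of basic support $\tau$-tilting $\mathcal{P}(Q)$-modules, and of $(W_Q,\le)$ with the weak right order are isomorphic, the isomorphisms sending the trivial torsion class $0$ to the module $0$ and to $e\in W_Q$, and $\mathcal{P}(Q)-mod$ to the module $\mathcal{P}(Q)$ and to $w_0$. A poset isomorphism restricts to a bijection between saturated chains joining two given elements and saturated chains joining their images, so maximal chains from $0$ to $\mathcal{P}(Q)-mod$ in the torsion-class Hasse quiver correspond bijectively to maximal chains from $0$ to $\mathcal{P}(Q)$ among support $\tau$-tilting modules and to maximal chains from $e$ to $w_0$ in the weak order; composing with the previous paragraph yields all four bijections.

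The main obstacle I anticipate is the bookkeeping in the step (i) $\leftrightarrow$ (iii): one must verify not only that the two endpoints correspond as claimed, but that a \emph{maximal} green sequence (one genuinely terminating at $\mathcal{A}[-1]$, tilting strictly at objects of $\mathcal{A}$ at every step) matches a \emph{saturated} chain actually reaching the top torsion class, rather than just a maximal directed path in the exchange quiver. This is precisely where the finiteness of the set of torsion classes is used, via Proposition \ref{prop4} and the termination argument in the proof of Lemma \ref{prop1}. Everything else is a straightforward transport of structure along Proposition \ref{prop4} and Mizuno's poset isomorphisms.
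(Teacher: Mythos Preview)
Your proposal is correct and follows essentially the same route as the paper: the paper's proof is a two-line argument invoking Proposition~\ref{prop4} for (i)$\leftrightarrow$(iii) and Mizuno's Theorem~4.1 in \cite{5} for (ii)$\leftrightarrow$(iii)$\leftrightarrow$(iv), and you have simply expanded the verification that the hypotheses of Proposition~\ref{prop4} are met (finiteness of bricks via Lemma~\ref{cecotti}) and spelled out the endpoint and saturation bookkeeping that the paper leaves implicit.
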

\begin{proof}
The bijection between (i) and (iii) follows from Proposition \ref{prop4} and the bijection between (ii), (iii) and (iv) is Theorem 4.1 in \cite{5}.
\end{proof}

\begin{rem}
By the prefix and chain property of the weak order of $W_Q$ it follows that all maximal chains between the trivial element $e$ and the longest element $w_0$ have length $l(w_0)$. The length of the longest element $l(w_0)$ equals the number of reflections in $W_Q$ which are in bijection with the positive roots of $Q$ \cite{156,157}. This is in consistency with Proposition \ref{prop2}.  
\end{rem}

For the quiver $A_m$ there is an explicit formula of R. Stanley for the number of reduced decompositions of the longest element $w_0$ of $W_{A_{m}}$ that equals the numbers of maximal chains of $\mathcal{P}(A_m)$ in Corollary \ref{cor2} (ii) \cite{159}:

\begin{center}
\begin{tabular}{c|c|c|c|c|c}
quiver& $A_2$&$A_3$&$A_4$&$A_5$&$A_6$\\
\hline
\# maximal green sequences&2&16&2048&292864&1100742656\\
\end{tabular}. 
\end{center}


\section{Stable modules over preprojective algebras}

In this section we recall the notion of stability for Abelian categories and use it to construct examples of maximal green sequences for preprojective algebras. Further, we review results that are used in section 3 for the proof of the main result.

\begin{defn} A \textit{central charge} on an Abelian category $\mathcal{A}$ is a group Homomorphism $Z:K(\mathcal{A})\rightarrow\mathbb{C}$ such that for any nonzero $E\in\mathcal{A}$, $Z(E)$ lies in the upper halfplane
\begin{align}
\label{halfplane}
\overline{\mathbb{H}}:=\left\{r\cdot exp(i\pi\phi)| 0<\phi\leq 1,r\in\mathbb{R}_{>0}\right\}\subset\mathbb{C}.
\end{align}
\end{defn}

Every object $E\in\mathcal{A}$ has a phase $0<\phi(E)\leq 1$ such that $$Z(E)=r\cdot exp(i\pi\phi(E))$$ with $r\in\mathbb{R}_{>0}$. We say a nonzero object $E\in\mathcal{A}$ is \textit{(semi)stable} with respect to the central charge $Z$ if every proper subobject $0\neq A\subset E$ satisfies $\phi(A)< \phi(E)$ ($\phi(A)\leq \phi(E)$). A central charge is called \textit{discrete} if different stable object have different phases.\\

The following useful Lemma is well-known:

\begin{lem}
\label{wellknown}
Let $E$ and $F$ be semistable objects with respect to a central charge on an Abelian category $\mathcal{A}$. If we have $\phi(E)>\phi(F)$, then $\mathrm{Hom}_{\mathcal{A}}(E,F)=0$.
\end{lem}

We consider stable modules in $\mathcal{A}=\mathcal{P}(Q)-nil$. Let $\alpha$ be a class in $K(\mathcal{A})$. We call a central charge $Z:K(\mathcal{A})\rightarrow\mathbb{C}$ \textit{generic with respect to $\alpha$} if $\Im(Z(\beta)/Z(\alpha))\neq 0$ for all $0<\beta<\alpha$.\\
  
A root $\alpha$ in the root lattice of $Q$ is called \textit{indivisible} if there is no root $\beta$ with $\alpha=m\beta$ for an integer $m$ with $|m|>1$. The real roots $\alpha$ are indivisible since we have $(\alpha,\alpha)=2$ in this case. Further, every imaginary root is a multiple of an indivisible root and all non-zero multiples are roots (cf. \cite{158}).


\begin{prop}\cite{160}
\label{cbb}
Let $\alpha$ be a positive indivisible root and let $Z:K(\mathcal{P}(Q)-nil)\rightarrow\mathbb{C}$ be a generic central charge with respect to $\alpha$, then there is a stable module in $\mathcal{P}(Q)-nil$ with respect to $Z$ with class $\alpha$.
\end{prop}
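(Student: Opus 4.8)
The plan is to follow the strategy of Crawley-Boevey and Van den Bergh, reducing the existence of a stable module of class $\alpha$ to a statement about a generic point of the moduli space of representations of $\mathcal{P}(Q)$. First I would recall the deformed preprojective algebras $\mathcal{P}^{\lambda}(Q)$ for $\lambda\in\mathbb{C}^{Q_0}$: these are obtained by replacing the relation $\sum_{a}(aa^*-a^*a)$ with $\sum_{a}(aa^*-a^*a)-\lambda$. The key input (this is the ``key step'' referred to in the introduction, Theorem 1.2 / the main construction of \cite{160}) is that for $\alpha$ a positive indivisible root and $\lambda$ in the root hyperplane $\lambda\cdot\alpha=0$ but generic subject to this, the variety of $\lambda$-semistable representations of dimension vector $\alpha$ is nonempty and its generic member is a simple representation of $\mathcal{P}^{\lambda}(Q)$. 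What I want is the analogous statement with the King-type stability replaced by $Z$-stability and $\lambda$ specialized to $0$.

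The main steps, in order, would be: (1) Translate the given central charge $Z:K(\mathcal{P}(Q)-nil)\to\mathbb{C}$ into a real stability parameter $\theta\in\mathbb{R}^{Q_0}$ in the sense of King, using that $K(\mathcal{P}(Q)-nil)\cong\mathbb{Z}Q_0$ and writing $Z=\mathrm{Re}(Z)+i\,\mathrm{Im}(Z)$; the genericity of $Z$ with respect to $\alpha$ (namely $\mathrm{Im}(Z(\beta)/Z(\alpha))\neq 0$ for $0<\beta<\alpha$) should translate into $\theta$ being generic with respect to $\alpha$, i.e. $\theta\cdot\beta\neq 0$ for $0<\beta<\alpha$, after normalizing so that $\theta\cdot\alpha=0$. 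Here I must be a little careful: $Z$-stability of an object and King $\theta$-stability of the corresponding representation must be matched up, but this is a standard comparison (phases versus slopes). (2) Invoke Proposition \ref{cbb}'s source, i.e. the result of \cite{160}, which produces a simple representation of the (undeformed, $\lambda=0$) preprojective algebra of dimension vector $\alpha$ that is $\theta$-stable, using that $\alpha$ is a positive root and $\theta$ is generic. (3) Observe that a representation of $\mathcal{P}(Q)$ of dimension vector $\alpha$ which is simple as a $\mathcal{P}(Q)$-module is automatically nilpotent, hence lies in $\mathcal{P}(Q)-nil$ (for $Q$ Dynkin this is automatic; in general a simple finite-dimensional module over $\mathcal{P}(Q)$ with dimension vector supported on all of $Q_0$ that is $\theta$-stable — or more simply, any brick whose class is a root — is nilpotent by the standard argument that the sum of the images of the arrows is a proper submodule unless the module is simple, combined with the classification of simples). (4) Conclude that this module is $Z$-stable in $\mathcal{A}=\mathcal{P}(Q)-nil$, by running the slope/phase comparison of step (1) backwards: any proper nonzero subobject $A\subset E$ has $0<\underline{\dim}\,A<\alpha$, so $\theta\cdot\underline{\dim}\,A\neq 0$, and $\theta$-stability forces $\theta\cdot\underline{\dim}\,A<0$, which translates to $\phi(A)<\phi(E)$.

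The main obstacle I anticipate is step (2) together with the bridge in step (1): the cited theorem of Crawley-Boevey–Van den Bergh is usually phrased for the deformed algebra $\mathcal{P}^{\lambda}(Q)$ with $\lambda$ a generic element of the hyperplane $\alpha^{\perp}$, not for $\lambda=0$ with a generic stability parameter $\theta$. One must either quote the precise form that already allows $\lambda=0$ and moves the genericity into $\theta$ (which is exactly the ``key step'' the introduction flags as available), or perform the deformation-to-stability dictionary oneself — and getting the genericity conditions to line up on the nose (that $Z$-genericity with respect to $\alpha$ gives a $\theta$ avoiding all the finitely many hyperplanes $\beta^{\perp}$ for $0<\beta<\alpha$) is the delicate point. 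Everything else — the identification of Grothendieck groups, the phase-versus-slope translation of Lemma \ref{wellknown}'s setup, and the nilpotency of the simple module — is routine once that dictionary is in place.
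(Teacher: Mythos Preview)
The paper does not give a proof of this proposition; it is quoted directly from \cite{160}, so there is no in-paper argument to compare against beyond the citation itself. That said, your sketch of how the cited result should be unpacked has a genuine error.

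The problem is step~(3). You assert that a representation of $\mathcal{P}(Q)$ of dimension vector $\alpha$ which is \emph{simple} as a $\mathcal{P}(Q)$-module is automatically nilpotent. The implication runs the other way: by definition a module is nilpotent exactly when all its composition factors lie among the vertex simples $S_1,\dots,S_n$, so a \emph{simple} nilpotent module must already be one of the $S_i$. Thus if $\alpha$ is any positive root other than a simple root, a simple $\mathcal{P}(Q)$-module of dimension vector $\alpha$ is \emph{never} nilpotent. Your parenthetical fallback (``any brick whose class is a root is nilpotent'') is not a standard fact either and fails already in the affine case, where $\mathcal{P}(Q)$ has plenty of non-nilpotent bricks.

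The confusion originates in step~(2), where ``simple'' and ``$\theta$-stable'' get conflated. For the deformed algebra $\mathcal{P}^{\lambda}(Q)$ with $\lambda$ generic on the hyperplane $\lambda\cdot\alpha=0$, the relevant module is indeed simple; but when one specializes to $\lambda=0$ and trades the deformation for a generic stability parameter $\theta$, what one obtains is a $\theta$-stable (hence endo-trivial) module, not a simple one. In the argument of \cite{160} the nilpotency is not deduced \emph{a posteriori} from simplicity: one works from the outset inside Lusztig's nilpotent variety $\Lambda(\alpha)$ and shows that a generic point of an appropriate irreducible component is $\theta$-stable. So the logical order is the reverse of what you propose --- nilpotency is part of the setup, stability is what gets proved. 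Your steps~(1) and~(4), the translation between the central-charge phase condition and King $\theta$-stability, are fine and are indeed the routine part of the dictionary.
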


For the definition of Bridgeland stability conditions on a triangulated category we refer to \cite{50}. Let $Stab^{\circ}(\mathcal{D})$ be the connected component of the space of stability conditions of the category $\mathcal{D}$ associated to a preprojective algebra containing the stability conditions with heart $\mathcal{A}=\mathcal{P}(Q)-nil$. Using the description of $Stab^{\circ}(\mathcal{D})$ given in \cite{1} for the Dynkin and affine case and in \cite{170} for the non-Dynkin case we can generalize this Proposition to any stability condition in $Stab^{\circ}(\mathcal{D})$:

\begin{prop}\cite{170}
\label{ikeda}
Given a stability condition $\sigma=(Z,\mathcal{P})$ in $Stab^{\circ}(\mathcal{D})$ and a indivisible root $\alpha$. Then there is a $\sigma$-semistable object with class $\alpha$. 
\end{prop}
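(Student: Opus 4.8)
The plan is to fix once and for all an indivisible root, thought of as a line $\mathbb{Z}\alpha$ in the root lattice (so that both $\pm\alpha$ are indivisible roots), and to study the subset
$$
\mathcal{S}_\alpha \;:=\; \{\sigma\in Stab^{\circ}(\mathcal{D}) \;:\; \text{there is a } \sigma\text{-semistable object of class } \alpha \text{ or of class } -\alpha\}.
$$
For any $\sigma$ at most one of $\pm\alpha$ can be the class of a non-zero object, namely the one whose $Z$-value lies in $\overline{\mathbb{H}}$; hence once we know $\mathcal{S}_\alpha = Stab^{\circ}(\mathcal{D})$, the Proposition follows (for the $\sigma$ and the representative $\alpha$ as in the statement, where $Z(\alpha)\in\overline{\mathbb{H}}$, the semistable object provided by $\mathcal{S}_\alpha$ necessarily has class $\alpha$). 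To prove $\mathcal{S}_\alpha = Stab^{\circ}(\mathcal{D})$ I would show that $\mathcal{S}_\alpha$ is closed and dense; a closed dense subset of a space is the whole space.

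The substantive point is density, and here I would transport Proposition \ref{cbb} by autoequivalences. Using the descriptions of $Stab^{\circ}(\mathcal{D})$ in \cite{1} (Dynkin and affine cases) and \cite{170} (remaining cases), the regions $U(\mathcal{A}')$ of stability conditions whose heart is one of the hearts $\mathcal{A}'$ reachable from $\mathcal{A}=\mathcal{P}(Q)-nil$ by a finite sequence of simple tilts have closures covering $Stab^{\circ}(\mathcal{D})$, and each such $\mathcal{A}'$ equals $\Phi(\mathcal{A})$ for an autoequivalence $\Phi$ in the group generated by the Seidel--Thomas twists $\Phi_{S_1},\dots,\Phi_{S_n}$. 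Given $\sigma\in U(\mathcal{A}')$ with central charge generic with respect to $\alpha$ — a dense subset of $U(\mathcal{A}')$, since the non-generic locus is a countable union of real hypersurfaces — the stability condition $\Phi^{-1}\sigma$ has heart $\mathcal{A}$ and central charge $Z\circ\Phi_{*}$, where $\Phi_{*}$ acts on $K(\mathcal{D})=\mathbb{Z}Q_0$ through the Weyl group $W_Q$ (each $\Phi_{S_i*}$ is a reflection in the simple root $[S_i]$ for the form $\chi$). Therefore $\alpha':=\Phi_{*}^{-1}(\alpha)$ is again an indivisible root, and since $(Z\circ\Phi_{*})(\alpha')=Z(\alpha)\in\overline{\mathbb{H}}$ while any root of $Q$ is either positive or negative, $\alpha'$ is a positive indivisible root (a negative one would force $(Z\circ\Phi_{*})(-\alpha')\in\overline{\mathbb{H}}$ as well, which is impossible). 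Proposition \ref{cbb} now produces a $\Phi^{-1}\sigma$-semistable module of class $\alpha'$ in $\mathcal{A}$, and applying $\Phi$ gives a $\sigma$-semistable object of class $\Phi_{*}(\alpha')=\alpha$. Thus $\mathcal{S}_\alpha$ contains a dense subset of $\bigcup_{\mathcal{A}'}U(\mathcal{A}')$, hence is dense in $Stab^{\circ}(\mathcal{D})$.

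Closedness is routine. By the cited descriptions the support property holds throughout $Stab^{\circ}(\mathcal{D})$, so in a compact neighbourhood of a limit point $\sigma_0$ the $\sigma$-semistable objects of class $\pm\alpha$ have uniformly bounded mass and, by the support property, form a bounded family. Hence if $\sigma_k\to\sigma_0$ with $E_k$ a $\sigma_k$-semistable object of class $\pm\alpha$, we may after passing to a subsequence take all $E_k$ equal to a fixed object $E$; the destabilising inequalities $\phi_{\sigma}(A)\le\phi_{\sigma}(E)$ are closed conditions in $\sigma$, so $E$ is $\sigma_0$-semistable and $\sigma_0\in\mathcal{S}_\alpha$.

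The main obstacle is the structural input from \cite{1,170}: that the tilted-heart regions $U(\mathcal{A}')$ (equivalently the translates $g\,U(\mathcal{A})$ under the twist group) have dense union in $Stab^{\circ}(\mathcal{D})$, together with the fact that every heart obtained by tilting $\mathcal{A}$ indefinitely is of the form $\Phi(\mathcal{A})$ for $\Phi$ in that group — the latter being standard in the $2$-Calabi--Yau setting but needing the identification of simple tilts with spherical twists. In the non-Dynkin case this density is the delicate part, as one must control the stability conditions on the boundary of this union, where the imaginary cone intervenes; however, since we only claim existence of semistable objects for \emph{indivisible} roots and Proposition \ref{cbb} already covers indivisible imaginary roots, no additional analysis of imaginary roots is required beyond quoting the topology of $Stab^{\circ}(\mathcal{D})$.
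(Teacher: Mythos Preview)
The paper does not supply its own proof of this proposition: it is stated with the attribution \cite{170} and is quoted as a result of Ikeda, the only hint being the sentence immediately preceding it that one ``uses the description of $Stab^{\circ}(\mathcal{D})$'' from \cite{1} and \cite{170} to extend Proposition~\ref{cbb}. There is therefore nothing in the present text to compare your argument against.

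As a standalone sketch your route is the expected one and is in the spirit of the cited sources: reduce to the Crawley-Boevey--van den Bergh input (Proposition~\ref{cbb}) on the standard heart, transport it to every tilted heart via the Seidel--Thomas twist group, and then close up. The density half is sound once one grants the structural facts you rightly flag from \cite{1,170}. (A cosmetic point: the clause ``a negative one would force $(Z\circ\Phi_{*})(-\alpha')\in\overline{\mathbb{H}}$ as well'' is worded a bit backwards, though the conclusion is correct --- a central charge on $\mathcal{A}$ sends every positive root into $\overline{\mathbb{H}}$ and hence every negative root out of it, so $\alpha'$ must be positive.)

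The soft spot is the closedness paragraph. The support property bounds the \emph{classes} of semistable objects of bounded mass, but here the class $\pm\alpha$ is already fixed, and for an indivisible \emph{imaginary} root there is typically a positive-dimensional family of stable $\mathcal{P}(Q)$-modules of that class. Hence ``after passing to a subsequence take all $E_k$ equal to a fixed object $E$'' is not justified as written: a bounded family need not contain only finitely many isomorphism classes. To repair this one needs either a wall-crossing argument showing that the \emph{existence} (rather than the identity) of a semistable object of class $\alpha$ persists when passing to a limit, or a more direct appeal to the explicit descriptions of $Stab^{\circ}(\mathcal{D})$ in \cite{1,170}, which is what the paper is implicitly invoking by citing the result.
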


The connection of stable modules to maximal green sequences is given by the following

\begin{prop}
\label{cmm}
Let $Q$ be a quiver without loops. Let $Z:K(\mathcal{P}(Q)-nil)\rightarrow\mathbb{C}$ be a discrete central charge with finitely many stable modules. Then the stable objects of $\mathcal{P}(Q)-nil$ in the order of decreasing phase define a maximal green sequence. 
\end{prop}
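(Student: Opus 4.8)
The plan is to show that a discrete central charge $Z$ with finitely many stable modules produces a finite chain of simple left tilts starting at $\mathcal{A}=\mathcal{P}(Q)-nil$ and ending at $\mathcal{A}[-1]$, where at each step we tilt at a stable module of currently largest phase. First I would list the stable objects of $\mathcal{A}$ with respect to $Z$ as $E_1,\dots,E_N$ in strictly decreasing order of phase; this is possible and unambiguous because $Z$ is discrete (distinct stable objects have distinct phases) and there are only finitely many of them. Each $E_k$ is a brick since a stable object has no proper subobject of equal phase, hence (over $\mathbb{C}$) $\mathrm{Hom}_{\mathcal{A}}(E_k,E_k)=\mathbb{C}$; in particular each $E_k$ is a simple object of an appropriate heart, as we will see inductively.

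The core of the argument is an induction showing that after tilting at $E_1,\dots,E_k$ we obtain an algebraic heart $\mathcal{A}_k$ whose stable objects (with respect to the same $Z$, now viewed as a central charge on $\mathcal{A}_k$ via $K(\mathcal{A}_k)=K(\mathcal{D})$) of phase in $(0,1]$ are exactly $E_{k+1},\dots,E_N$, while $E_1[-1],\dots,E_k[-1]$ lie in $\mathcal{A}_k$ with phases having wrapped around. Concretely: by Lemma \ref{wellknown} the object $E_1$ of largest phase admits no nonzero map from any other object of $\mathcal{A}$, so $\left\langle E_1\right\rangle$ is the torsion class of a torsion pair whose torsion-free part is $\{E\in\mathcal{A}\mid\mathrm{Hom}_{\mathcal{A}}(E_1,E)=0\}$; since $\mathcal{A}$ is algebraic and $E_1$ is a brick, $E_1$ is simple in $\mathcal{A}$ and this is precisely the simple tilt of Definition \ref{mgs} at $E_1$. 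In the tilted heart $\mathcal{A}_1=\left\langle\mathcal{F},\langle E_1\rangle[-1]\right\rangle$ one checks that the semistable objects are inherited from $\mathcal{A}$: the $Z$-semistable objects of $\mathcal{A}$ of phase $<\phi(E_1)$ remain in $\mathcal{A}_1$ with unchanged phase, $E_1$ is replaced by $E_1[-1]$, and no new stable classes appear because any object of $\mathcal{A}_1$ is an extension of an object of $\mathcal{F}$ by an object of $\langle E_1\rangle[-1]$ and its Harder–Narasimhan / Jordan–Hölder factors are controlled by those of $\mathcal{A}$. Thus the stable objects of $\mathcal{A}_1$ of phase in $(0,1]$ are $E_2,\dots,E_N$, and $E_2$ now has the largest phase among them; iterating via Lemma \ref{composition} (the successive torsion classes $\langle E_1\rangle\subset\langle E_1\rangle\star\langle E_2\rangle\subset\cdots$ nest correctly, so each step is again a simple left tilt \emph{at an object of $\mathcal{A}$}) gives the chain $\mathcal{A}=\mathcal{A}_0,\mathcal{A}_1,\dots,\mathcal{A}_N$.

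It remains to identify $\mathcal{A}_N$ with $\mathcal{A}[-1]$. After tilting at all $N$ stable objects, $\mathcal{A}_N$ contains no stable object of $\mathcal{A}$ of phase in $(0,1]$ at all; every simple object of $\mathcal{A}_N$ must be of the form $E_j[-1]$. Since $\mathcal{A}_N=\left\langle\mathcal{F}_N,\mathcal{T}_N[-1]\right\rangle$ for the torsion pair $(\mathcal{T}_N,\mathcal{F}_N)$ in $\mathcal{A}$ with $\mathcal{T}_N=(\cdots(\langle E_1\rangle\star\langle E_2\rangle)\star\cdots)\star\langle E_N\rangle$, and every nonzero object of $\mathcal{A}$ has at least one stable (hence $Z$-semistable) quotient whose class survives the filtration, one shows $\mathcal{T}_N=\mathcal{A}$ and $\mathcal{F}_N=0$, whence $\mathcal{A}_N=\mathcal{A}[-1]$; by Lemma \ref{prop1}(ii) the sequence uses exactly the $n$ simple modules and is genuinely maximal. \textbf{The main obstacle} I anticipate is the inductive bookkeeping of stability across tilts — precisely, verifying that tilting at a top-phase stable object neither destroys $Z$-stability of the lower objects nor creates spurious stable classes, and that "finitely many stable modules for $Z$ on $\mathcal{A}$" forces the process to terminate after finitely many steps rather than producing ever-new stable objects in the tilted hearts; this is where the discreteness hypothesis and the fact that $K(\mathcal{A}_k)=K(\mathcal{D})$ is a fixed finite-rank lattice carrying a fixed $Z$ do the real work, together with Proposition \ref{cbb}/\ref{ikeda} guaranteeing enough stable objects exist to fill out the chain.
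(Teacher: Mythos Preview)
The paper's own proof is a one-line citation to Proposition~4.1 of \cite{10}, so you are in effect supplying the argument rather than comparing against one. Your overall strategy---order the stable objects by decreasing phase, tilt successively, and identify the final heart with $\mathcal{A}[-1]$ via the Harder--Narasimhan filtration---is the correct one and is presumably what the cited result does.

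There is, however, a genuine gap at the key step. You assert that ``since $\mathcal{A}$ is algebraic and $E_1$ is a brick, $E_1$ is simple in $\mathcal{A}$''; this implication is false in general (bricks need not be simple), and your appeal to Lemma~\ref{wellknown} just before is in the wrong direction: it gives $\mathrm{Hom}(E_1,F)=0$ for semistable $F$ of smaller phase, not vanishing of maps \emph{into} $E_1$. What actually forces $E_1$ to be simple in $\mathcal{A}$ is the seesaw property: a proper nonzero subobject $A\subset E_1$ has $\phi(A)<\phi(E_1)$, hence $\phi(E_1/A)>\phi(E_1)$, and then the top Harder--Narasimhan factor of $E_1/A$ would contain a stable subquotient of phase strictly larger than $\phi(E_1)$, contradicting maximality. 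The same argument, run in $\mathcal{A}_{k-1}$ with the rotated central charge (equivalently: regard $Z$ as a Bridgeland stability condition on $\mathcal{D}$ and recognise $\mathcal{A}_{k-1}$ as the slice $\mathcal{P}((\phi_k-1,\phi_k])$), shows that $E_k$ is simple in $\mathcal{A}_{k-1}$. This is precisely what Definition~\ref{mgs} requires of a simple tilt and what your phrase ``iterating via Lemma~\ref{composition}'' leaves unjustified.

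Two smaller points. Your identification $\mathcal{T}_N=\mathcal{A}$ follows directly from the existence of HN filtrations (every object is an iterated extension of the $E_i$), so invoking Propositions~\ref{cbb} and \ref{ikeda} at the end is a red herring: those produce stable objects with prescribed class, which is not what is at stake here. And Lemma~\ref{prop1}(ii) is a consequence of the sequence being maximal, not an ingredient in proving maximality.
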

\begin{proof}
This follows immediately from the proof of Proposition 4.1 in \cite{10}.
\end{proof}

\begin{rem}
\label{rem1}
Note that Dynkin quivers automatically have finitely many stable modules since all stables are bricks. Thus Corollary \ref{cor2} gives a classification of (possible) BPS spectra ordered by phase.
\end{rem}

\section*{Acknowledgements} 

I thank Jan Schr\"oer and Tom Bridgeland for a helpful discussion respectively correspondence.

\end{document}